 \newtheorem{theorem}{Theorem}[section]
 \newtheorem{thm}[theorem]{Theorem}
 \newtheorem{lem}[theorem]{Lemma}
 \newtheorem{cor}[theorem]{Corollary}
 \newtheorem{prop}[theorem]{Proposition}
\newcounter{remark}[section]
\newenvironment{remark}[1][]{\refstepcounter{remark}\par\medskip
\noindent\textbf{Remark~\theremark #1.} \rmfamily}{\medskip}
\newcommand{\ord}{\mathrm{ord}_2}
\newcommand{\ki}{{k\choose i}}
\newcommand{\kki}{{k-1\choose i}}
\newcommand{\mm}{\;(\mathrm{mod}\,4)}
\newcommand{\mtwo}{\;(\mathrm{mod}\,2)}
\newcommand{\h}{\mathcal{H}}
\newcommand{\ii}{\mathcal{I}}
\def\@author#1{\g@addto@macro\elsauthors{\normalsize%
    \def\baselinestretch{1}%
    \upshape\authorsep#1\unskip\textsuperscript{%
      \ifx\@fnmark\@empty\else\unskip\sep\@fnmark\let\sep=,\fi
      \ifx\@corref\@empty\else\unskip\sep\@corref\let\sep=,\fi
      }%
    \def\authorsep{\unskip,\space}%
    \global\let\@fnmark\@empty
    \global\let\@corref\@empty  
    \global\let\sep\@empty}%
    \@eadauthor={#1}
}
\begin{document}

\title{Perfect quantum  state transfer on the Johnson scheme}

\author{Bahman Ahmadi\corref{cor1}}
\ead{bahman.ahmadi@shirazu.ac.ir}
\cortext[cor1]{Corresponding author}
  \author{M. H. Shirdareh Haghighi}
 \ead{shirdareh@shirazu.ac.ir}
  \author{Ahmad Mokhtar}
 \ead{ahmad99mokhtar@gmail.com}

\address{Department of Mathematics, Shiraz University, Shiraz, Iran}

 \begin{abstract}
For any graph $X$ with the adjacency matrix $A$, the transition matrix of the continuous-time quantum walk at time $t$ is given by  the matrix-valued function   $\h_X(t)=\mathrm{e}^{itA}$. We say that there is  perfect state transfer in $X$ from the vertex $u$ to the vertex $v$ at time $\tau$ if $|\h_X(\tau)_{u,v}| = 1$. It is an important problem to determine whether perfect state transfers can happen on a given family of graphs. In this paper we characterize all the graphs in the Johnson scheme which have this property. Indeed, we show that the Kneser graph $K(2k,k)$ is the only class in the scheme which admits perfect state transfers. We also show that, under some conditions, some of the unions of the graphs in the Johnson scheme admit perfect state transfer.
 \end{abstract}

 \begin{keyword}
 perfect state transfer, association scheme, Johnson scheme
 \end{keyword}

\maketitle

\section{\bf Introduction}\label{introduction}

Let  $X$ be a  simple graph. The transition matrix of the continuous-time quantum walk  at time $t$ on the graph $X$ is given by  the matrix-valued function   $\h_X(t)=\mathrm{e}^{itA}$, where $A=A(X)$ is the adjacency matrix of $X$. We say that there is  \textit{perfect state transfer} (or PST) in $X$ from the vertex $u$ to the vertex $v$ at time $\tau$ if $|\h_X(\tau)_{u,v}| = 1$ or, equivalently, if $\h_X(\tau) e_u= \lambda e_v$, for some  $\lambda \in \mathbb{C}$, where $e_u$ is the characteristic vector of the vertex $u$ in $V(X)$. (Note that since $\h_X(\tau)$ is unitary, we must have $|\lambda|=1$.) If the graph $X$ is clear from the context, we may drop the subscript $X$.  We, further, say that $X$ is \textit{periodic} at a vertex $u$ if there is a positive time $\tau$ such that $|\h(\tau)_{u,u}| = 1$, and we say $X$ itself is periodic if there is a positive time $\tau$ such that $|\h(\tau)_{u,u}| = 1$ for all vertices $u$. Note that if there is  PST from $u$ to $v$, then there is also  PST from $v$ to $u$ (so we may just say there is  PST ``between'' $u$ and $v$), and that $X$ will be periodic at $u$ and at $v$ (see \cite{godsil2011periodic}).

 It is  an important question to ask on which graphs PST can happen. As an easy example we can show that PST occurs between the vertices of the the path $P_2$  (see Corollary~\ref{K_2}). We can, also, show directly (see \cite{godsil2012state}) that PST happens between the end-points of the path $P_3$, as well; but this is not the case for any other path. In other words, it has been shown in  \cite{christandl2004perfect} that if $n>3$, then there is no PST on the path $P_n$. On the other hand, in the works such as  \cite{cheung2011perfect,christandl2004perfect,godsil2011periodic,Petkovic}, the existence of PST in different graphs has been shown. See \cite{godsil2012state} for a survey on the subject. 

In \cite{coutinho2015perfect} necessary and sufficient conditions have been proposed for  graphs in association schemes to have PST and using this result the authors have examined some of the graphs in some association schemes. Since one of the most important association schemes is the Johnson scheme $J(n,k)$, it is quite natural to ask whether  any of the classes of this scheme admits PST. 


In this paper,  we prove  that except for the Kneser graph $K(2k,k)$, which is one of the classes of the Johnson scheme for the case $n=2k$, no other class has the capability to have PST. We, further, deduce that the union of all the graphs in the scheme $J(2k,k)$, except the Kneser graph $K(2k,k)$, admits PST as long as the number of vertices of the scheme is divisible by $4$.

In the next section, we will provide  some brief background on the topic. Then we will prove the main result in  Section~\ref{johnson}. In Section~\ref{section_sums} we will study the case of sums of the classes of the scheme $J(2k,k)$ and, finally, conclude the paper with some further discussions  in Section~\ref{conclusion}.


\section{Background}
In this section, we provide the reader with some necessary tools and facts on the topic, which will be used in Section~\ref{johnson}. For the notation and basic facts of  algebraic  graph theory and association schemes  the reader may refer to \cite{godsil2013algebraic} and \cite{eiichi1984algebraic}. 
All the association schemes are assumed to be symmetric. Let  $\mathcal{A}=\{I=A_0, A_1,\ldots, A_d \}$ (where $A_i$ are $v\times v$ matrices) be an association scheme with $d$ classes whose minimal  idempotents (i.e. projections to the eigenspaces) are $\mathcal{E}=\{E_0, E_1,\ldots, E_d\}$. 
Since $\mathcal{E}$ is a  basis for the Bose-Mesner algebra of $\mathcal{A}$, there are coefficients $p_i(j)$ such that 
\[ A_i=\sum_{j=0}^d p_i(j) E_j,\quad \text{for }\, i=0,\ldots, d.\]
For any $i=0,\ldots,d$, the (real) numbers $p_i(j)$, $j=0,\ldots, d$,  are the eigenvalues of $A_i$ and the columns of $E_j$ are eigenvectors of $A_i$. We can think of   $A_i$ as the adjacency matrix of a regular graph $X_i$  on $v$ vertices of valency $p_i(0)$. We will denote the valency of $X_i$ by $v_i$.
On the other hand, the  idempotents $E_i$ can also be written as a linear combination of the $A_j$'s; that is, there are coefficients $q_i(j)$ such that 
\begin{equation}\label{idems_in_terms_of_adjacencies}
E_i=\frac{1}{v}\sum_{j=0}^d q_i(j) A_j,\quad \text{for }\, i=0,\ldots, d.
\end{equation}
The numbers $q_i(j)$ are called the \textit{dual eigenvalues} of the scheme. We note that $q_j(0) = \mathrm{tr}(E_j)$ which is equal to the dimension of the $j$-th eigenspace of the matrices of the scheme which will be denoted by $m_j$. The following fundamental result will be used in Section~\ref{johnson} (see~\cite{eiichi1984algebraic}). 
\begin{prop}\label{dual_evals}
Using the notation above, for any  $i,j=0,\ldots,d$, we have $q_j(i)/m_j=p_i(j)/v_i$.\qed
\end{prop}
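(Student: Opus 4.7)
The plan is to exploit that both the adjacency matrices $A_i$ and the idempotents $E_j$ are bases of the Bose--Mesner algebra, and to compare the expansions of $A_i$ and $E_j$ against each other through a single bilinear form, namely the trace inner product $\langle M,N\rangle = \mathrm{tr}(MN)$. The whole identity will drop out from computing $\mathrm{tr}(A_i E_j)$ in two different ways.

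First I would establish the two orthogonality relations that make this strategy work. Since the scheme is symmetric and the $A_i$ are 0-1 matrices with pairwise disjoint supports, $A_i A_j$ has zero diagonal whenever $i \neq j$, while $A_i A_i = A_i A_i^T$ counts, at each diagonal entry, the degree of the corresponding vertex in $X_i$. Summing over all $v$ rows gives
\[
\mathrm{tr}(A_i A_j) = v\, v_i\, \delta_{ij}.
\]
Similarly, because the $E_j$ are the pairwise orthogonal projections onto the common eigenspaces, $E_i E_j = \delta_{ij} E_i$, and since $\mathrm{tr}(E_j) = m_j$ we obtain
\[
\mathrm{tr}(E_i E_j) = m_j\, \delta_{ij}.
\]

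Next I would plug the two given expansions into $\mathrm{tr}(A_i E_j)$. Using $A_i = \sum_k p_i(k) E_k$ together with the second orthogonality relation yields
\[
\mathrm{tr}(A_i E_j) = \sum_{k=0}^{d} p_i(k)\, \mathrm{tr}(E_k E_j) = p_i(j)\, m_j.
\]
On the other hand, using $E_j = \frac{1}{v} \sum_k q_j(k) A_k$ together with the first orthogonality relation yields
\[
\mathrm{tr}(A_i E_j) = \frac{1}{v} \sum_{k=0}^{d} q_j(k)\, \mathrm{tr}(A_i A_k) = \frac{1}{v}\, q_j(i)\, v\, v_i = q_j(i)\, v_i.
\]
Equating these two expressions gives $p_i(j)\, m_j = q_j(i)\, v_i$, which, after dividing by $m_j v_i$ (both positive), is exactly the stated identity $q_j(i)/m_j = p_i(j)/v_i$.

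The only thing that requires any care is the trace computation $\mathrm{tr}(A_i A_j) = v v_i \delta_{ij}$, which is the one place the combinatorial structure of the scheme enters; both the symmetry of the scheme (so $A_j^T = A_j$) and the fact that distinct relations have disjoint supports are used there. Once that is in hand, the rest is essentially a formal manipulation via the dual bases, so I don't foresee any serious obstacle.
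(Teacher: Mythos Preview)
Your argument is correct: computing $\mathrm{tr}(A_iE_j)$ in two ways via the orthogonality relations $\mathrm{tr}(A_iA_j)=v\,v_i\,\delta_{ij}$ and $\mathrm{tr}(E_iE_j)=m_j\,\delta_{ij}$ is exactly the standard proof of this identity. Note that the paper itself does not prove Proposition~\ref{dual_evals} at all---it simply records it as a known fact (hence the bare \qed) with a reference to Bannai--Ito~\cite{eiichi1984algebraic}; what you have written is essentially the textbook derivation one finds there, so there is nothing to compare beyond observing that you supplied a proof where the paper only gave a citation.
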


Given an association scheme, the following result gives necessary conditions for a graph belonging to the scheme to have PST \cite{coutinho2015perfect}.

\begin{thm} \label{perm_matrix_godsil}
Let $X$ be a graph that belongs to an association scheme with $d$ classes and with adjacency matrix $A =A(X)$. If $X$ admits perfect state transfer at time $\tau$, then there is a permutation matrix $T$ with no fixed points and of order two such that $\h(\tau) =\lambda T$ for some $\lambda \in \mathbb{C}$. Moreover, $T$ is a class of the scheme. \qed
\end{thm}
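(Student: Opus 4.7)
The strategy is to exploit that $\h(\tau)=\mathrm{e}^{i\tau A}$ is a polynomial in $A$ and therefore lies in the Bose-Mesner algebra of the scheme. Starting from the spectral decomposition $A=\sum_{j=0}^{d}p_1(j)E_j$, one has
\[
\h(\tau)=\sum_{j=0}^{d}\mathrm{e}^{i\tau p_1(j)}E_j,
\]
and then, via \eqref{idems_in_terms_of_adjacencies}, expressing each $E_j$ in the basis $\{A_0,\ldots,A_d\}$ yields scalars $c_0,\ldots,c_d\in\mathbb{C}$ with $\h(\tau)=\sum_{j=0}^{d}c_j A_j$.

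The next step is to exploit the fact that the relations $R_0,R_1,\ldots,R_d$ partition the pairs of vertices, so every entry of $\h(\tau)$ equals exactly one of these scalars: $\h(\tau)_{u,w}=c_j$ precisely when $(u,w)\in R_j$. Now fix the PST pair $u,v$ and let $i$ be the unique index with $(u,v)\in R_i$. Unitarity of $\h(\tau)$ together with the assumption $|\h(\tau)_{u,v}|=1$ gives $\sum_{w}|\h(\tau)_{u,w}|^2=1$, which forces every entry in row $u$ other than the $v$-entry to vanish. Hence $c_j=0$ for all $j\neq i$ and $|c_i|=1$, so that
\[
\h(\tau)=c_i A_i =:\lambda T,
\]
with $\lambda:=c_i$ of modulus one and $T:=A_i$ a class of the scheme. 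This already proves the ``$\h(\tau)=\lambda T$'' and ``$T$ is a class of the scheme'' parts of the statement.

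It remains to verify the three asserted properties of $T$. Since $\h(\tau)$ is unitary and $|\lambda|=1$, one has $TT^{*}=I$; combined with $T$ being a $\{0,1\}$-matrix, this forces $T$ to be a permutation matrix. Symmetry of the scheme gives $T=T^{T}$, so $T^{2}=TT^{*}=I$, i.e.\ $T$ is an involution. Finally, since PST is the case $u\neq v$ of the condition $|\h(\tau)_{u,v}|=1$ (the case $u=v$ being periodicity), we have $i\neq 0$, so $T=A_i$ has zero diagonal and hence no fixed points; in particular $T\neq I$. I foresee no real obstacle: the argument rests only on the two facts that $\h(\tau)$ lies in the Bose-Mesner algebra and that the relations partition the vertex pairs. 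The subtlest point is the implicit assumption $u\neq v$ in the PST definition, which is what makes the ``no fixed points'' conclusion meaningful.
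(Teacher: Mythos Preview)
The paper does not actually prove this theorem: it is quoted from Coutinho--Godsil--Guo--Vanhove \cite{coutinho2015perfect} and marked with a \qed. Your argument is correct and is essentially the standard proof from that reference: $\h(\tau)$ lies in the Bose--Mesner algebra, so its entries are constant on each relation; unitarity plus $|\h(\tau)_{u,v}|=1$ then kills all but one coefficient in the expansion $\h(\tau)=\sum_j c_jA_j$.

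Two small remarks. First, writing $A=\sum_j p_1(j)E_j$ tacitly assumes $A=A_1$; in general $X$ is an arbitrary graph in the scheme, so the eigenvalues need not be $p_1(j)$. This is cosmetic, since all you actually use is that $e^{i\tau A}$ is a polynomial in $A$ and hence lies in the algebra. Second, the step ``every other entry in row $u$ vanishes, hence $c_j=0$ for $j\neq i$'' also silently forces the valency $v_i$ of the surviving relation to equal~$1$ (otherwise a second vertex $w\neq v$ with $(u,w)\in R_i$ would give another entry of modulus~$1$). You recover this anyway via the observation that an orthogonal $\{0,1\}$-matrix is a permutation matrix, so nothing is lost, but it is worth making the implicit $v_i=1$ explicit.
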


The following result from~\cite{godsil2012state}, as well, provides a necessary condition for the existence of  PST on a vertex-transitive graph.

\begin{thm}\label{perm_matrix_vtransitive}
Let $X$ be a connected vertex-transitive graph. If $X$ admits perfect state transfer at time $\tau$, then there is a permutation matrix $T$ with no fixed points and of order two such that $\h(\tau) =\lambda T$ for some $\lambda \in \mathbb{C}$.\qed
\end{thm}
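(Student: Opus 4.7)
The plan is to exploit vertex-transitivity in order to propagate the single PST identity $\h(\tau)e_{u_0}=\lambda e_{v_0}$ to every vertex simultaneously, and then to read off $T$ from the resulting action of $\h(\tau)$ on the standard basis. First I would fix the PST pair $u_0,v_0$ with $|\lambda|=1$. For any other vertex $w$, vertex-transitivity produces an automorphism $\pi\in\mathrm{Aut}(X)$ with $\pi(u_0)=w$; the corresponding permutation matrix $P_\pi$ commutes with $A$, and hence with $\h(\tau)=\mathrm{e}^{i\tau A}$, so
\[
\h(\tau)e_w=\h(\tau)P_\pi e_{u_0}=P_\pi\h(\tau)e_{u_0}=\lambda\, e_{\pi(v_0)}.
\]
Thus $\h(\tau)$ carries every standard basis vector to a scalar multiple of another, and crucially with the \emph{same} scalar $\lambda$ at each vertex. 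Note that connectedness of $X$ is not really needed for this part; it enters only implicitly through vertex-transitivity.

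Having done this, I would define $\sigma(w)$ to be the unique vertex for which $\h(\tau)e_w\in\mathbb{C}\cdot e_{\sigma(w)}$ (uniqueness is clear since distinct basis vectors span distinct lines). The map $\sigma$ is a bijection on $V(X)$ because $\h(\tau)$ is invertible, and writing $T$ for its permutation matrix gives $\h(\tau)=\lambda T$. For order two, I would observe that $A$ is real symmetric, so $\h(\tau)$ is symmetric; hence $T=\bar\lambda\, \h(\tau)$ is a symmetric permutation matrix and therefore satisfies $T^{2}=TT^{\top}=I$. For the absence of fixed points, suppose $\sigma(w)=w$ for some $w$ and pick $\pi$ with $\pi(u_0)=w$; the displayed computation forces $\sigma(w)=\pi(v_0)$, whence $\pi(v_0)=\pi(u_0)$ and so $v_0=u_0$, contradicting the hypothesis that PST takes place between distinct vertices.

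The main obstacle is not any individual step but the need to \emph{rigidify} the separate vertex-by-vertex PST statements into a single global identity $\h(\tau)=\lambda T$ with one common $\lambda$; everything else is bookkeeping. This rigidification hinges on the commutation $[P_\pi,\h(\tau)]=0$ coming from vertex-transitivity, so the only serious work is invoking it correctly. Once that is done, order two and fixed-point-freeness fall out from the symmetry of $A$ and the distinctness of $u_0,v_0$ respectively, and the resulting $T$ is manifestly a permutation matrix because each column of $\h(\tau)$ is, by construction, a scalar multiple of a standard basis vector.
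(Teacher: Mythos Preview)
The paper does not supply its own proof of this theorem; it is quoted from \cite{godsil2012state} and closed with a \qed. Your argument is correct and is essentially the standard one found in that reference: transport the single PST identity to every vertex via automorphisms (which commute with $\h(\tau)$), obtaining the same scalar $\lambda$ everywhere, read off the permutation $\sigma$, then use that $\h(\tau)$ is a symmetric matrix to force $T=T^{\top}=T^{-1}$ and hence $T^{2}=I$, and use injectivity of $\pi$ to rule out fixed points. Your observation that connectedness is not actually invoked in the argument is also accurate; vertex-transitivity alone carries the proof, and connectedness is present in the hypothesis only because of the context in which the result is typically applied.
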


Suppose in a class of an association scheme PST occurs and that $E_0,\ldots, E_d$ are the idempotents of the scheme. By Theorem~\ref{perm_matrix_godsil}, $T$ must be one of the classes of the scheme and since the eigenvalues of   $T$  are $\{-1,+1\}$, for any $j=0,\ldots,d $, we have $T E_j=\pm E_j$. As   in     \cite{coutinho2015perfect}, we define a partition $(\ii^+_T, \ii^-_T)$ of $\{0,\ldots, d\}$ as $j\in \ii^+_T$ if $TE_j=E_j$ and $j\in \ii^-_T$ otherwise. We might drop the subscript $T$ if $T$ is clear from the context.
Furthermore, for any $x\in \mathbb{Z}$ we denote by $\mathrm{ord}_2(x)$ the exponent of $2$ in the factorization of $x$ and by convention we define $\mathrm{ord}_2(0)=+\infty$. 

The following theorem, which is  a part of  \cite[Theorem~4.3]{coutinho2015perfect}, provides other necessary conditions for the existence of PST on a graph belonging to an association scheme. 
According to \cite[Lemma~4.2]{coutinho2015perfect}, if PST happens in a graph belonging to an association scheme, then all the eigenvalues of the graph are integers. Let $X$ be a graph in an association scheme on which PST occurs and let  $\lambda_0>\cdots>\lambda_d$ be the distinct integer eigenvalues of $X$. Define
\begin{equation}\label{alpha}
 \alpha_X=\mathrm{gcd}(\{\lambda_0-\lambda_j\; | \; j\in \{0,\ldots,d\}\}). 
 \end{equation}
We will drop the subscript $X$, if the graph $X$ is clear from the context.

\begin{thm} \label{class_of_scheme_pst}
With the notation above, if PST occurs on $X$, then we have $\mathrm{ord}_2(\lambda_0-\lambda_j)>\mathrm{ord}_2(\alpha)$ for $j \in \ii^+$ and $\mathrm{ord}_2(\lambda_0-\lambda_{\ell})=\mathrm{ord}_2(\alpha)$ for $\ell \in \ii^-$.  \qed
\end{thm}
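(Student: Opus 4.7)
The plan is to combine the spectral decomposition of $A$ with the structural restriction $\h(\tau)=\lambda T$ supplied by Theorem~\ref{perm_matrix_godsil}, and then read off the conclusion from a short $2$-adic computation. Writing $A=\sum_j \lambda_j E_j$, where $E_j$ is the spectral idempotent associated to $\lambda_j$, one has $\h(\tau)=\sum_j e^{i\tau\lambda_j}E_j$. By Theorem~\ref{perm_matrix_godsil} the matrix $T$ lies in the Bose--Mesner algebra, so $T=\sum_j \epsilon_j E_j$ for some scalars $\epsilon_j$; the relation $T^2=I$ forces $\epsilon_j^2=1$, and by the definition of $(\ii^+,\ii^-)$ we have $\epsilon_j=+1$ on $\ii^+$ and $\epsilon_j=-1$ on $\ii^-$. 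Linear independence of the idempotents then yields $e^{i\tau\lambda_j}=\lambda\epsilon_j$ for every $j$.

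To pin down $\lambda$, I would use that $E_0=\tfrac{1}{v}J$ and that the permutation matrix $T$ satisfies $TJ=J$, so $0\in\ii^+$ and $\lambda=e^{i\tau\lambda_0}$. Setting $g_j:=\lambda_0-\lambda_j\in\mathbb{Z}_{\ge 0}$, the spectral identities become
\[
\tau g_j\in 2\pi\mathbb{Z}\ \text{for}\ j\in\ii^+, \qquad \tau g_\ell\in\pi(2\mathbb{Z}+1)\ \text{for}\ \ell\in\ii^-.
\]
Since $T$ is fixed-point free, $T\ne I$, so at least one $\epsilon_j=-1$, i.e.\ $\ii^-$ is nonempty; the second family of conditions then forces $\tau/\pi$ to be rational. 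Writing $\tau/\pi=r/s$ in lowest terms with $s>0$ reduces the theorem to a divisibility question.

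The remaining step is the $2$-adic analysis. For any $\ell\in\ii^-$, the relation $rg_\ell=s(2k_\ell+1)$ together with $\gcd(r,s)=1$ forces $r$ to be odd and yields $\ord(g_\ell)=\ord(s)$. For $j\in\ii^+$ with $j\ne 0$, the relation $rg_j=2sk_j$ with $k_j\ne 0$ (using distinctness of the $\lambda_j$) gives $\ord(g_j)\ge 1+\ord(s)$; for $j=0$ one has $g_0=0$ and $\ord(0)=+\infty$. Since $g_0$ contributes nothing to the gcd in~\eqref{alpha}, it follows that $\ord(\alpha)=\min_j\ord(g_j)=\ord(s)$, the minimum being attained precisely on $\ii^-$, and both halves of the theorem follow. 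I expect the main obstacle to be this last $2$-adic bookkeeping, specifically the simultaneous deductions that $r$ must be odd and that all $\ord(g_\ell)$ for $\ell\in\ii^-$ coincide with $\ord(s)$; once the identity $e^{i\tau\lambda_j}=\lambda\epsilon_j$ is in hand, everything else is essentially forced.
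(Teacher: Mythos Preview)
Your argument is correct. Note, however, that the paper does not supply its own proof of this theorem: it is quoted (with a terminal \qed) as part of \cite[Theorem~4.3]{coutinho2015perfect}, so there is no in-paper proof to compare against. What you have written is essentially the standard derivation underlying that cited result: combine the spectral decomposition $\h(\tau)=\sum_j e^{i\tau\lambda_j}E_j$ with the structural identity $\h(\tau)=\lambda T$ from Theorem~\ref{perm_matrix_godsil}, read off $e^{i\tau(\lambda_0-\lambda_j)}=\epsilon_j\in\{\pm1\}$, rationalise $\tau/\pi$, and finish with the $2$-adic bookkeeping. Each step you outline is sound, including the two points you flagged as potential obstacles (oddness of $r$ from $r\mid(2k_\ell+1)$, and the common value $\ord(g_\ell)=\ord(s)$ for all $\ell\in\ii^-$). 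The only minor caveat is your appeal to ``distinctness of the $\lambda_j$'' to get $k_j\ne0$ for $j\in\ii^+\setminus\{0\}$: the paper's hypothesis $\lambda_0>\cdots>\lambda_d$ justifies this, but even without it the case $g_j=0$ gives $\ord(g_j)=+\infty>\ord(\alpha)$, so the conclusion survives.
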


It is clear that for any such $T$, $0 \in \ii^+_T$, but identifying the whole partition  $(\ii^+, \ii^-)$ is not easy in general. 
We next point out the following observation which follows easily from the definition (see \cite{godsil2012state}).

\begin{cor}\label{K_2}
There is PST between the vertices of the graph $K_{2}$ at time $\tau=\pi/2$.\qed
\end{cor}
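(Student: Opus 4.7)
The plan is to compute $\mathcal{H}_{K_2}(\pi/2)$ directly and observe that its off-diagonal entry has modulus one. There are two natural routes, and I would pick whichever is cleaner in context.

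The first route uses the spectral decomposition. The adjacency matrix $A=A(K_2)=\begin{pmatrix}0&1\\1&0\end{pmatrix}$ has eigenvalues $\pm 1$ with idempotents $E_0=\tfrac12(J_2)$ and $E_1=\tfrac12(I-A)$ corresponding to $+1$ and $-1$ respectively, so $A=E_0-E_1$. Then
\[
\mathcal{H}_{K_2}(t)=e^{it}E_0+e^{-it}E_1,
\]
and substituting $t=\pi/2$ gives $\mathcal{H}_{K_2}(\pi/2)=iE_0-iE_1=iA$, whose $(1,2)$-entry has absolute value $1$.

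The second (essentially equivalent) route is to exploit $A^2=I$, which yields the closed form $e^{itA}=\cos(t)\,I+i\sin(t)\,A$. At $t=\pi/2$ the cosine term vanishes and $\sin(\pi/2)=1$, so again $\mathcal{H}_{K_2}(\pi/2)=iA$, giving PST between the two vertices.

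There is no real obstacle here; the only thing to be careful about is making the conclusion consistent with the definition of PST used in the paper, namely $|\mathcal{H}(\tau)_{u,v}|=1$ (equivalently $\mathcal{H}(\tau)e_u=\lambda e_v$ with $|\lambda|=1$). Both routes exhibit $\lambda=i$ explicitly, which matches.
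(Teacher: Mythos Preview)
Your argument is correct; the paper itself offers no proof beyond noting that the result ``follows easily from the definition'' and marking it with \qed, so your explicit computation (by either the spectral decomposition or the identity $e^{itA}=\cos(t)I+i\sin(t)A$ from $A^2=I$) is exactly the kind of verification the paper has in mind.
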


From \cite{godsil2012state} we also recall the following result   which provides a sufficient condition for a graph to be periodic.
\begin{thm} \label{periodic_godsil}
If all   eigenvalues of a graph $X$ are integers, then $X$ is periodic. \qed
\end{thm}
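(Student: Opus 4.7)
The plan is to exhibit an explicit time $\tau>0$ at which $|\h(\tau)_{u,u}|=1$ for every vertex $u$ simultaneously, using the spectral decomposition of $A=A(X)$. Let $\lambda_0,\ldots,\lambda_d$ denote the distinct eigenvalues of $A$ and $E_0,\ldots,E_d$ the corresponding spectral idempotents, so that
\[
A=\sum_{j=0}^d \lambda_j E_j, \qquad \h(\tau)=\mathrm{e}^{i\tau A}=\sum_{j=0}^d \mathrm{e}^{i\tau\lambda_j}\,E_j.
\]
This is the standard identity for a function of a diagonalisable matrix, and it is where the proof will be driven from.

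The key observation is that if every $\lambda_j$ is an integer, then setting $\tau=2\pi$ gives $\mathrm{e}^{i\tau\lambda_j}=\mathrm{e}^{2\pi i\lambda_j}=1$ for each $j$. Substituting into the spectral expansion,
\[
\h(2\pi)=\sum_{j=0}^d E_j = I,
\]
since the spectral idempotents sum to the identity. In particular $\h(2\pi)_{u,u}=1$, hence $|\h(2\pi)_{u,u}|=1$, for every vertex $u\in V(X)$, so $X$ is periodic at every vertex with common period $\tau=2\pi$.

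There is essentially no obstacle here beyond recording the spectral decomposition and invoking integrality of the $\lambda_j$. One could optionally refine the argument to produce a smaller period: if $\alpha=\gcd(\lambda_i-\lambda_j)$, then $\tau=2\pi/\alpha$ already makes all the factors $\mathrm{e}^{i\tau(\lambda_j-\lambda_0)}$ equal to $1$, giving $\h(\tau)=\mathrm{e}^{i\tau\lambda_0}I$, which still has $|\h(\tau)_{u,u}|=1$ for every $u$. But for the statement as written, the single line $\tau=2\pi$ suffices, and the argument is complete.
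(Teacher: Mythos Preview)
Your argument is correct. Note, however, that the paper does not supply its own proof of this theorem: it is quoted from \cite{godsil2012state} and marked with a \qed{} immediately after the statement. Your spectral-decomposition proof, observing that $\h(2\pi)=\sum_j \mathrm{e}^{2\pi i\lambda_j}E_j=\sum_j E_j=I$ when every $\lambda_j\in\mathbb{Z}$, is exactly the standard argument behind that cited result, so there is nothing to compare against here beyond confirming that you have recovered the intended one-line proof.
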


We conclude this section with a brief introduction to  the Johnson scheme. Throughout the paper we will assume that $n\geq 2k$. For any $i=0,1,\ldots, k$, we define the graph $J(n,k,i)$ to be the graph whose vertex set is the set of all $k$-subsets of $\{1,\ldots,n\}$ and in which two vertices $A$ and $B$ are adjacent if  $|A\cap B|=i$  (in some texts the adjacency in $J(n,k,i)$ is defined as when $|A\cap B|=k-i$, but this makes no difference in our discussion). It can be shown that the set $\mathcal{A}=\{A_0,A_1,\ldots, A_k\}$, where $A_i$ is $A(J(n,k,k-i))$, the adjacency matrix of the graph $J(n,k,k-i)$, is an association scheme (see for example \cite{eiichi1984algebraic}). This scheme is called the \textit{Johnson scheme}, denoted by $J(n,k)$, and the classes $J(n,k,i)$ are called the \textit{generalized Johnson graphs}.
The special cases of $J(n,k,0)$ and $J(n,k,k-1)$ are called the \textit{Kneser graph} (often denoted by $K(n,k)$), and the \textit{Johnson graph}, respectively. Note that the adjacency matrix of  $J(n,k,k)$  is the identity matrix; that is $A_0=I$. In the following lemma, we observe that most of the generalized Johnson graphs are connected.

\begin{lem}\label{connected}
The graph $J(n,k,i)$ is connected if the non-negative integer  $i$   satisfies $2k-n < i < k$.
\end{lem}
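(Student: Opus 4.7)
The plan is to reduce the statement to the well-known connectivity of the Johnson graph $J(n,k,k-1)$. The vertex set of every $J(n,k,i)$ is the same, namely the collection of $k$-subsets of $[n]=\{1,\ldots,n\}$, and it is classical that any two $k$-subsets can be transformed into one another by a sequence of single-element swaps; equivalently, the Johnson graph $J(n,k,k-1)$ is connected. Therefore it suffices to show that for every pair of Johnson-adjacent vertices $A$ and $A'=(A\setminus\{x\})\cup\{y\}$ there is a walk of length at most $2$ in $J(n,k,i)$ joining them.

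To produce such a short detour I would exhibit an explicit intermediate $k$-subset $C$ with $|A\cap C|=|A'\cap C|=i$. Partition $[n]$ into the four blocks $A\cap A'$ (of size $k-1$), the singletons $\{x\}$ and $\{y\}$, and $[n]\setminus(A\cup A')$ (of size $n-k-1$), and let $C$ consist of any $i$ elements of $A\cap A'$ together with any $k-i$ elements of $[n]\setminus(A\cup A')$. Then $C$ is disjoint from $\{x,y\}$, has cardinality $k$, and meets each of $A$ and $A'$ in precisely the $i$ chosen elements of $A\cap A'$, so $|A\cap C|=|A'\cap C|=i$ exactly.

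The hypothesis $2k-n<i<k$ matches this construction sharply: drawing $i$ elements from a set of size $k-1$ requires $i\leq k-1$, i.e.\ $i<k$, while drawing $k-i$ elements from a set of size $n-k-1$ requires $k-i\leq n-k-1$, i.e.\ $i>2k-n$. I do not anticipate a genuine obstacle; the only care needed is in verifying these parameter inequalities and noting the degenerate case $i=k-1$, where $A$ and $A'$ are already adjacent and the constructed path $A\to C\to A'$ simply provides a superfluous detour. Conversely, at the excluded endpoints connectivity genuinely fails ($i=k$ yields the empty graph, and $i=2k-n$ with $n=2k$ yields the perfect matching $K(2k,k)$), which shows that the stated range of $i$ is best possible.
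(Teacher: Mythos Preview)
Your argument is correct and is essentially the same as the paper's: both proofs show that any two Johnson-adjacent vertices $A$ and $A'$ are joined in $J(n,k,i)$ by a path of length~$2$ through an intermediate set $C$ built from $i$ elements of $A\cap A'$ and $k-i$ elements of $[n]\setminus(A\cup A')$, and then reduce general connectivity to a chain of single-element swaps. The only cosmetic difference is that the paper spells out the iterated swaps explicitly, whereas you invoke the connectivity of the Johnson graph $J(n,k,k-1)$ directly.
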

\begin{proof}
Consider a vertex $A=\{a_1,\ldots,a_k\}$ from the graph $X=J(n,k,i)$. For any vertex $B=\{b, a_2,\ldots, a_k\}$ which is obtained by changing one of the elements of $A$, we claim that there is a path from $A$ to $B$. According to the conditions on $n,k$ and $i$,  the set $\{1,\ldots,n\}\setminus \{a_1,\ldots, a_k,b\}$ contains at least $k-i$ elements such as $c_1,\ldots, c_{k-i}$; so we have the following path in~$X$:
\[ A\sim \{c_1,\ldots, c_{k-i}, a_{k-i+1}, \ldots, a_k\}\sim \{b,a_2,\ldots, a_{k-i}, a_{k-i+1}, \ldots, a_k\}=B, \]
and the claim follows. If there is a path between two vertices $V$ and $W$, we will write $V\longleftrightarrow W$. Now we show that, for each vertex $C=\{b_1,\ldots,b_k\}$, $A\longleftrightarrow C$. Indeed, if $A\cap C=\varnothing$ the following path is in~$X$:
\[ A \longleftrightarrow \{b_1, a_2,\ldots, a_k\} \longleftrightarrow \{b_1,b_2, a_3,\ldots, a_k\} \longleftrightarrow \cdots \longleftrightarrow \{b_1,b_2,\ldots, b_k\}=C. \]
On the other hand, if $|A\cap C|=m>0$, then without loss of generality we may assume that 
$A\cap C=\{b_1,\ldots, b_m\}=\{a_1,\ldots,a_m\}$. Then, since 
\[\{ a_1,\ldots,a_m,b_{m+1}, a_{m+2},\ldots, a_k\} = \{b_1,\ldots,b_m, b_{m+1},a_{m+2},\ldots, a_k\},\]
the following path exists:
\begin{align*}
 A &\longleftrightarrow \{b_1,\ldots, b_m,b_{m+1}, a_{m+2}, \ldots, a_k\} \\
& \longleftrightarrow \{b_1,\ldots, b_m, b_{m+1}, b_{m+2}, a_{m+3},\ldots, a_k \} \longleftrightarrow\cdots \longleftrightarrow \{b_1,\ldots, b_k\}=C.
 \end{align*}
Therefore the result follows.
\end{proof}
In particular, if $k\geq 2$ and $0<i<k$, then  all the generalized Johnson graphs $J(2k,k,i)$ are connected.

It is clear that the generalized Johnson graph $J(n,k,i)$ is a regular graph with $v={n \choose k}$ vertices and  valency $v_i={k\choose i}{n-k \choose k-i}$. Furthermore, it has been shown (see \cite[p. 220]{eiichi1984algebraic}) that the   eigenvalues of the graphs $J(n,k,i)$  are given by 
\begin{equation}\label{evals_of_johnson_scheme} 
\lambda_j^i= \sum_{\ell=0}^{k-i} (-1)^\ell {j \choose \ell} {k-j \choose k-i-\ell} {n-k-j \choose k-i-\ell}, \quad \text{ for } j=0,\ldots, k,
\end{equation}
with some appropriate multiplicities.
Note that the eigenvalues of the scheme $\mathcal{A}$ are, then, given by $p_i(j) = \lambda_j^{k-i}$. Assuming that $i$ is clear from the context, we will just write $\lambda_j$ instead of $\lambda_j^i$. An immediate consequence of~(\ref{evals_of_johnson_scheme})  is that  all the eigenvalues of the graph $J(n,k,i)$ are integers. Hence, according to Theorem~\ref{periodic_godsil}, we observe the following.
\begin{cor}\label{periodic}
All   graphs in the Johnson scheme are periodic. \qed
\end{cor}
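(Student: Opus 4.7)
The plan is extremely short: this is an immediate two-step deduction from material already in the excerpt, so the ``proof'' is essentially a pointer to the right combination of results.

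First, I would observe from formula~(\ref{evals_of_johnson_scheme}) that every eigenvalue $\lambda_j^i$ of $J(n,k,i)$ is, by inspection, a signed sum of products of binomial coefficients, hence an integer. Since this holds for each class $J(n,k,i)$ of the Johnson scheme, every graph in the scheme has integer spectrum.

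Second, I would invoke Theorem~\ref{periodic_godsil}, which asserts exactly that a graph with integer eigenvalues is periodic. Applying it class by class concludes the corollary.

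There is no real obstacle here; the only thing to be slightly careful about is to make clear that integrality of the $\lambda_j^i$ is literally read off from~(\ref{evals_of_johnson_scheme}) (each summand is an integer), so no further computation is needed before appealing to Theorem~\ref{periodic_godsil}.
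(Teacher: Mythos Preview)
Your proposal is correct and matches the paper's approach exactly: the paper also simply notes that the eigenvalues in~(\ref{evals_of_johnson_scheme}) are visibly integers and then invokes Theorem~\ref{periodic_godsil}.
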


\section{PST on  the Johnson Scheme}\label{johnson}
In this section we will prove the main result of the paper; i.e. we address the question  of whether perfect state transfer can occur in any class of the Johnson scheme. 
First, we  provide a necessary condition for the existence of PST in the Johnson scheme.

\begin{prop}\label{n=2k}
If a graph in the Johnson scheme $J(n,k)$ admits PST then we must have $n=2k$.
\end{prop}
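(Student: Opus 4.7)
The plan is to apply Theorem~\ref{perm_matrix_godsil} directly and translate it into a combinatorial condition on the valency of a class of the scheme. By that theorem, if PST occurs in a graph of the Johnson scheme $J(n,k)$, then there is a class $T = A_r$ of the scheme that is a permutation matrix of order two with no fixed points. Since $A_0 = I$ fixes every vertex, we must have $r \in \{1,\ldots,k\}$, and since $T$ is a $\{0,1\}$-matrix whose rows sum to $1$ and whose diagonal is zero, the corresponding generalized Johnson graph $J(n,k,k-r)$ must be $1$-regular with no loops, i.e.\ a perfect matching.

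Next I would plug in the valency formula recalled in the background: $J(n,k,j)$ is regular of valency $\binom{k}{j}\binom{n-k}{k-j}$. Setting $j = k-r$ and imposing valency~$1$ gives
\[
\binom{k}{k-r}\binom{n-k}{r} \;=\; 1,
\]
so both binomial coefficients must equal $1$. The factor $\binom{k}{k-r}=1$ forces $k - r \in \{0,k\}$; the factor $\binom{n-k}{r}=1$ forces $r \in \{0,n-k\}$. Combined with $r \ne 0$ (to avoid the identity) and $n \ge 2k$, the only admissible solution is $k-r = 0$ and $r = n-k$, which is to say $n = 2k$ and $r = k$. In that case $T = A_k = A(J(2k,k,0)) = A(K(2k,k))$ is genuinely a fixed-point-free involution, matching each $k$-set with its complement, so the necessary condition $n=2k$ is also consistent.

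There is no real obstacle here beyond bookkeeping: the content of the proposition is already packaged inside Theorem~\ref{perm_matrix_godsil}, and the only care required is to track the indexing convention $A_i = A(J(n,k,k-i))$ and to note that $A_0 = I$ must be excluded by the ``no fixed points'' clause. The harder substantive work — proving that no \emph{other} class of $J(2k,k)$ admits PST — is deferred to the subsequent arguments and is not needed for this first reduction step.
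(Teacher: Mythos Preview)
Your argument is correct and is essentially identical to the paper's own proof: both invoke Theorem~\ref{perm_matrix_godsil} to force some class of the scheme to be a fixed-point-free involution, set the valency $\binom{k}{j}\binom{n-k}{k-j}=1$, and solve the resulting binomial constraints while excluding the identity class. The only difference is cosmetic indexing (you work with $r$ via $A_r=A(J(n,k,k-r))$, the paper works directly with $i$ in $J(n,k,i)$).
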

\begin{proof}
If there is PST in  $X$, according to Theorem~\ref{perm_matrix_godsil}, there is  a graph $J(n,k,i)$ in the scheme such that $A(J(n,k,i))=T$ is a permutation matrix of order two; hence $v={n\choose k}$ must be even and one of the graphs of the scheme must be the union of $ \frac{1}{2}{n\choose k}$ copies of $K_2$ edges, that is, $J(n,k,i)\cong  \frac{1}{2}{n\choose k} K_2$. This implies that the valency
\[ {k\choose i}{n-k \choose k-i} \]
equals $1$. Hence, ${k\choose i}=1={n-k \choose k-i}$. The first equality implies $i=k$ or $i=0$, but if the former happens, the graph $J(n,k,i)$ will be the empty graph which implies that $T$ is the identity matrix, a contradiction. Thus we can only have $i=0$. Now the second equality implies $n=2k$.
\end{proof}
\begin{cor}\label{kneser_has_pst}
The Kneser graph $K(n,k)$ admits PST if and only if $n=2k$.
\end{cor}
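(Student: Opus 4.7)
The plan is to dispose of each direction separately, using the machinery already assembled.

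For the forward direction, assume $K(n,k)=J(n,k,0)$ admits PST. Since $K(n,k)$ is a class of the Johnson scheme $J(n,k)$, Proposition~\ref{n=2k} applies directly and forces $n=2k$. So this direction is immediate.

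For the reverse direction, suppose $n=2k$. The key observation is that in $K(2k,k)$ each $k$-subset $A\subseteq\{1,\ldots,2k\}$ is adjacent only to its complement $\{1,\ldots,2k\}\setminus A$, because that is the unique $k$-subset disjoint from $A$. Hence the valency is $1$ and
\[ K(2k,k)\;\cong\;\tfrac{1}{2}\tbinom{2k}{k}\,K_2, \]
i.e.\ a disjoint union of $\tfrac12\binom{2k}{k}$ edges. I would then invoke the standard fact (immediate from $A(G_1\sqcup G_2)=A(G_1)\oplus A(G_2)$ and hence $\h_{G_1\sqcup G_2}(t)=\h_{G_1}(t)\oplus\h_{G_2}(t)$) that PST on a disjoint union occurs between the endpoints of any component on which PST occurs, at the same time. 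By Corollary~\ref{K_2} there is PST on $K_2$ at $\tau=\pi/2$, so $K(2k,k)$ admits PST between every pair of complementary $k$-subsets at time $\tau=\pi/2$.

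There is no real obstacle here: the entire content is the matching structure of $K(2k,k)$ plus the block-diagonal form of $\h(t)$ on a disjoint union. The only small care to take is to note that when $n=2k$ the vertex set of $K(2k,k)$ splits cleanly into complementary pairs (so the number of vertices $\binom{2k}{k}$ is indeed even), which justifies writing the graph as a perfect matching.
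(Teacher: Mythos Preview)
Your proof is correct and follows essentially the same approach as the paper: invoke Proposition~\ref{n=2k} for the forward direction, and for the converse observe that $K(2k,k)\cong\frac{1}{2}\binom{2k}{k}K_2$ and appeal to Corollary~\ref{K_2}. The only difference is that you spell out the block-diagonal argument for PST on a disjoint union, which the paper leaves implicit.
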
 
\begin{proof}
If $K(n,k)=J(n,k,0)$ has PST, then  according to Proposition~\ref{n=2k}, $n=2k$. On the other hand, since $K(2k,k)\cong \frac{1}{2}{2k\choose k} K_2$ and  there is PST on $K_2$ (Corollary~\ref{K_2}), the result follows.
\end{proof}

In the rest of this section we will show that the Kneser graph $J(2k,k,0)$ is the only graph in the Johnson scheme $J(2k,k)$ which has PST. In other words, we prove the following theorem, which is the main result of the paper.

\begin{thm}\label{main_johnson}
There is PST in the class $J(n,k,i)$ of the Johnson scheme $J(n,k)$ if and only if $n=2k$ and $i=0$.
\end{thm}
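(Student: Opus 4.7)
The plan is to reduce to the case $n=2k$, $1\le i\le k-1$, and contradict the conclusion of Theorem~\ref{class_of_scheme_pst}. By Proposition~\ref{n=2k}, PST on $J(n,k,i)$ forces $n=2k$; the case $i=0$ is handled by Corollary~\ref{kneser_has_pst}, while $i=k$ is excluded because $J(2k,k,k)$ has adjacency matrix $A_0=I$. So I fix $1\le i\le k-1$ and assume for contradiction that $X=J(2k,k,i)$ admits PST. By Lemma~\ref{connected} the graph $X$ is connected, so Theorem~\ref{perm_matrix_godsil} applies and the associated involution $T$ is a class of the scheme; exactly the argument in the proof of Proposition~\ref{n=2k} (valency $1$ together with the absence of fixed points) then forces $T$ to be a perfect matching, and hence $T=A_k=A(K(2k,k))$.

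Next I would pin down the partition $(\ii^+_T,\ii^-_T)$. By Proposition~\ref{dual_evals}, $TE_j=p_k(j)E_j=\lambda_j^0E_j$, and inspecting~(\ref{evals_of_johnson_scheme}) at $n=2k$, $i=0$ shows that the only surviving term is $\ell=j$, which gives $\lambda_j^0=(-1)^j$. Therefore $\ii^+_T$ consists of the even indices in $\{0,\ldots,k\}$ and $\ii^-_T$ of the odd ones. Writing $\Delta_j=\lambda_0^i-\lambda_j^i$, Theorem~\ref{class_of_scheme_pst} demands
\[
\mathrm{ord}_2(\Delta_j)>\mathrm{ord}_2(\alpha)\ \text{for every even }\, j\in\{2,\ldots,k\}, \qquad \mathrm{ord}_2(\Delta_j)=\mathrm{ord}_2(\alpha)\ \text{for every odd }\, j,
\]
so that for any odd $\ell$ and any even $j>0$ one would need $\mathrm{ord}_2(\Delta_j)>\mathrm{ord}_2(\Delta_\ell)$.

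The remaining and hardest step is to produce, for every admissible $(k,i)$, an opposite-parity pair of indices violating this strict inequality. A short manipulation of~(\ref{evals_of_johnson_scheme}) using Pascal's identity yields the closed forms $\Delta_1=2\binom{k}{i}\binom{k-1}{i}$ and $\Delta_2=2\binom{k-1}{i}\binom{k-1}{i-1}+2\binom{k}{i}\binom{k-2}{i-1}$, while setting $j=k$ in~(\ref{evals_of_johnson_scheme}) collapses the sum to $\lambda_k^i=(-1)^{k-i}\binom{k}{i}$, so that $\Delta_k=\binom{k}{i}(\binom{k}{i}-(-1)^{k-i})$. In many cases the pair $(1,2)$ already does the job, since $\mathrm{ord}_2(\Delta_2)\le\mathrm{ord}_2(\Delta_1)$ outright. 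However, examples such as $(k,i)=(6,4)$, where $\Delta_1=150$ and $\Delta_2=220$, show that this comparison alone is not always sufficient, and one must also bring in $\Delta_k$ (and possibly an intermediate $\Delta_j$), splitting into cases on the parities of $k$, $i$, $\binom{k}{i}$, and $\binom{k-1}{i}$ and invoking Kummer's theorem on the $2$-adic valuations of binomial coefficients. This $2$-adic bookkeeping is where I expect the real work to lie.
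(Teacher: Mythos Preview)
Your framework coincides with the paper's: reduce to $n=2k$, $1\le i\le k-1$, identify $T=A_k$, read off $\ii^+_T=\{\text{even }j\}$ and $\ii^-_T=\{\text{odd }j\}$ via $p_k(j)=(-1)^j$, and then look for a violation of Theorem~\ref{class_of_scheme_pst}. Your proposal is honest that the real content is the final step, and that is indeed where the gap lies.

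The paper's case split is not on the parities of $k$ and $i$ directly but on the parities of $\binom{k}{i}$ and $\binom{k-1}{i}$, and more importantly the ``witness'' indices it uses are not $j=2$ or $j=k$ in the hard cases. When $\binom{k}{i}$ is even your $\Delta_1$ together with $\lambda_k-\lambda_{k-1}=2(-1)^{k-i}\binom{k-1}{i}$ already suffices. But when $\binom{k}{i}$ is odd the paper first bounds $\ord(\alpha)\le 1$ by showing $\lambda_i-\lambda_{i+1}\equiv 2\pmod 4$ (a telescoping computation using Lucas to kill cross terms), and then, in the subcase where $\binom{k-1}{i}$ is also odd, produces the contradiction from the \emph{even-index} pair $\lambda_{k-i+2}-\lambda_{k-i}\equiv 2\pmod 4$ (if $k$ is even) or $\lambda_{k-i+1}-\lambda_{k-i-1}\equiv 2\pmod 4$ (if $k$ is odd). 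These mid-range indices $j=i,\,i+1,\,k-i,\,k-i\pm1,\,k-i+2$ are what make the $2$-adic computation close, and they are precisely what your plan based on $\Delta_1,\Delta_2,\Delta_k$ is missing; as Remark~\ref{computer} notes, the choice of these particular eigenvalues was guided by computer search, and for most indices the $\ord$ condition actually holds. The tool throughout is Lucas's theorem rather than Kummer's, applied via the observation that $\binom{k}{i}$ odd forces $i\preccurlyeq k$ in binary, which in turn forces one of $\binom{i}{\ell},\binom{k-i}{\ell}$ to be even for each $\ell\ge 1$.
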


In order to prove  Theorem~\ref{main_johnson}, we   need several preliminary lemmas.
First note that, using~({\ref{evals_of_johnson_scheme}), we can list all the eigenvalues of $J(2k,k,i)$.
\begin{lem}\label{evals_of_J(2k,k,i)}
For any $i=0,\ldots,k$, the eigenvalues of $J(2k,k,i)$ are given by
\[
\lambda_j= \sum_{\ell=0}^{k-i} (-1)^\ell {j \choose \ell} {k-j \choose i-j+\ell}^2, \quad  j=0,\ldots, k.  \qed
\]
\end{lem}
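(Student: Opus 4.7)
The plan is to derive the stated formula as a direct specialization of the general eigenvalue formula \eqref{evals_of_johnson_scheme} for the Johnson scheme, restricted to $n=2k$, followed by a single symmetry rewrite of the binomial coefficients. No new combinatorial identity or representation-theoretic input is needed beyond what is already quoted from \cite{eiichi1984algebraic}.

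Concretely, I would proceed as follows. First, set $n=2k$ in \eqref{evals_of_johnson_scheme}. Then the ``third'' binomial coefficient ${n-k-j \choose k-i-\ell}$ reduces to ${k-j \choose k-i-\ell}$, which is \emph{the same} as the ``second'' binomial coefficient ${k-j \choose k-i-\ell}$ already appearing in the summand. Hence the product of the two factors in \eqref{evals_of_johnson_scheme} collapses to the single square ${k-j \choose k-i-\ell}^2$, and one has
\[
\lambda_j^i \;=\; \sum_{\ell=0}^{k-i}(-1)^\ell {j\choose \ell}{k-j\choose k-i-\ell}^{2}.
\]
To match the formula in the statement, I would then apply the elementary symmetry ${a\choose b}={a\choose a-b}$ to one (or both) factors with $a=k-j$ and $b=k-i-\ell$, giving $a-b=(k-j)-(k-i-\ell)=i-j+\ell$, so ${k-j\choose k-i-\ell}={k-j\choose i-j+\ell}$. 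Squaring yields the claimed expression. The summation index $\ell$ still ranges from $0$ to $k-i$, and any terms in which the shifted lower index $i-j+\ell$ falls outside $\{0,\ldots,k-j\}$ simply vanish under the usual convention, so no reindexing is required.

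There is essentially no obstacle: the entire argument is a substitution ($n-k-j=k-j$) together with the basic binomial symmetry. The only point that is worth stating carefully for the reader is why the second and third factors in \eqref{evals_of_johnson_scheme} become identical when $n=2k$, which is the one numerical observation driving the whole simplification.
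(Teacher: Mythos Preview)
Your proposal is correct and matches the paper's own treatment: the lemma is stated with a \qed and no proof, precisely because it is the immediate specialization of \eqref{evals_of_johnson_scheme} to $n=2k$ followed by the binomial symmetry ${k-j\choose k-i-\ell}={k-j\choose i-j+\ell}$, exactly as you describe.
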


In some special cases, we can simplify the formula of the eigenvalues as follows. 
\begin{align}
&\lambda_0 =\ki^2,\label{lambda_0}\\[2pt]
&\lambda_1 =\ki^2 - 2{k \choose i}{k-1\choose i},\label{lambda_1}\\[2pt]
&\lambda_k =(-1)^{k-i}{k\choose i},\label{lambda_k}\\[2pt]
&\lambda_{k-1}=(-1)^{k-i}\left[{k\choose i} - 2{k-1 \choose i}\right], \label{lambda_k_1}\\[2pt]
&\lambda_i=\sum_{\ell=0}^{k-i} (-1)^\ell {i \choose \ell} {k-i \choose \ell}^2,\label{lambda_i}\\[2pt]
&\lambda_{i+1}=\sum_{\ell=0}^{k-i} (-1)^\ell {i+1 \choose \ell} {k-i-1 \choose \ell-1}^2.\label{lambda_i+1}
\end{align}
In particular, in the scheme $\mathcal{A}$, we have $v_i=\ki^2$. Next, we identify the partition $(\ii^-,\ii^+)$ for the   generalized Johnson graph $J(2k,k,i)$.

\begin{prop}\label{evals_partition_even_odd}
For any $i=1,\ldots,k-1$ and any eigenvalue $\lambda_j$ of $J(2k,k,i)$, we have $j\in\ii^+$ if $j$ is even and $j\in\ii^-$ if $j$ is odd.
\end{prop}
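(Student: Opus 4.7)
The plan is to pin down the unique permutation matrix $T$ available in the scheme and then read off the action of $T$ on each minimal idempotent $E_j$. The partition $(\ii^+,\ii^-)$ is defined purely in terms of $T$, so it does not actually depend on $i$; the restriction $1\leq i\leq k-1$ just identifies the classes that remain as candidates for admitting PST (the class $i=0$ is the Kneser graph, already handled by Corollary~\ref{kneser_has_pst}, and $i=k$ gives the identity matrix).

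First I would observe that, by the same valency argument used in the proof of Proposition~\ref{n=2k}, the only class of the scheme $J(2k,k)$ whose matrix is a non-identity involutive permutation matrix is $A_k=A(J(2k,k,0))$, the complementation matching $A\mapsto\overline{A}$. Hence Theorem~\ref{perm_matrix_godsil} forces $T=A_k$, and the task reduces to determining the sign $p_k(j)$ with which $A_k$ acts on each eigenspace.

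Since $A_k E_j=p_k(j)E_j$, we have $j\in\ii^+$ when $p_k(j)=1$ and $j\in\ii^-$ when $p_k(j)=-1$. Plugging $i=0$ into Lemma~\ref{evals_of_J(2k,k,i)} gives
\[\lambda_j^0=\sum_{\ell=0}^{k}(-1)^\ell\binom{j}{\ell}\binom{k-j}{\ell-j}^2.\]
The key collapse is that $\binom{j}{\ell}$ vanishes for $\ell>j$ while $\binom{k-j}{\ell-j}$ vanishes for $\ell<j$, so the only surviving term is $\ell=j$, which contributes $(-1)^j\binom{j}{j}\binom{k-j}{0}^2=(-1)^j$. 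Therefore $p_k(j)=(-1)^j$ and $TE_j=(-1)^j E_j$, which is precisely the partition in the statement.

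The only step requiring any care is the identification $T=A_k$; once that is settled the eigenvalue sum telescopes to a single term and the parity conclusion is immediate. I do not anticipate a genuine obstacle here.
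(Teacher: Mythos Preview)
Your argument is correct and follows the same essential route as the paper: identify $T$ with the Kneser class $A_k$ and then compute its eigenvalues $p_k(j)=(-1)^j$ from Lemma~\ref{evals_of_J(2k,k,i)}. The only difference is that the paper reaches the sign of $TE_j$ by computing the $(1,v)$-entry of $E_j$ via the dual-eigenvalue expansion (Proposition~\ref{dual_evals}), whereas you use the immediate identity $A_kE_j=p_k(j)E_j$; your shortcut is a mild streamlining of the same idea.
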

\begin{proof}
Let $\{E_0,\ldots,E_k\}$ be the idempotents of the scheme $\mathcal{A}$ and $T=A_k=A(J(2k,k,0))$ be the adjacency matrix of the Kneser graph $J(2k,k,0)$. Let $i=1,\ldots, k-1$ be arbitrary. Recall that $\lambda_j=p_{k-i}(j)$.  We sort the vertices of the scheme in the lexicographic order of the $k$-subsets of $\{1,\ldots, 2k\}$, so that the first vertex is $U=\{1,\ldots,k\}$ and the last (i.e. the $v$-th) vertex is $\overline{U}$ so $T$ is anti-diagonal. As mentioned before Theorem~\ref{class_of_scheme_pst}, for any $j=0,\ldots,k$, we have $TE_j=\pm E_j$. This means that for any $j$, $(TE_j)_{1,1}=\pm (E_j)_{1,1}$. On the other hand, we have $(TE_j)_{1,1}=(E_j)_{1,v}$. Thus $j\in \ii^+$ if $(E_j)_{1,v}= (E_j)_{1,1}$  and $j\in \ii^-$ otherwise. Note that $(E_j)_{1,1}$ is positive (indeed the diagonal entries of these idempotent are always positive). Now, using~(\ref{idems_in_terms_of_adjacencies}) and  Proposition~\ref{dual_evals}, we have   
\[ E_j = \frac{1}{v}\sum_{\ell=0}^k q_j(\ell) A_\ell= \frac{m_j}{v}\sum_{\ell=0}^k \frac{p_\ell(j)}{v_\ell} A_\ell. \]
Since $(A_k)_{1,v}=1$, we deduce  that 
\[ (E_j)_{1,v} =  \frac{m_j}{v} \frac{p_k(j)}{v_k} (A_k)_{1,v}= \frac{m_j}{v} p_k(j). \]
But $p_k(j)$ are the eigenvalues of the Kneser graph and according to Lemma~\ref{evals_of_J(2k,k,i)},
\[ p_k(j)=\lambda_j=(-1)^j.\]
Therefore $(E_j)_{1,v}$ is positive if and only if $j$ is even and the result follows.
\end{proof}

 We first   consider the problem of existence of PST in the case where ${k \choose i}$ is even.
 \begin{lem}\label{no_pst_when_degree_odd}
 For any $i=1,\ldots,k-1$, if ${k\choose i}$ is even, then there is no PST on $J(2k,k,i)$.
 \end{lem}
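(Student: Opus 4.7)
The plan is to derive a contradiction by comparing $\mathrm{ord}_2(\lambda_0-\lambda_1)$ and $\mathrm{ord}_2(\lambda_0-\lambda_k)$ in light of Proposition~\ref{evals_partition_even_odd} and Theorem~\ref{class_of_scheme_pst}. Using the explicit formulas~(\ref{lambda_0}), (\ref{lambda_1}) and (\ref{lambda_k}), one computes
\[
\lambda_0-\lambda_1 = 2{k\choose i}{k-1\choose i},\qquad \lambda_0-\lambda_k = {k\choose i}\left({k\choose i} - (-1)^{k-i}\right).
\]
Assuming ${k\choose i}$ is even, ${k\choose i}-(-1)^{k-i}$ is odd, so $\mathrm{ord}_2(\lambda_0-\lambda_k) = \mathrm{ord}_2\!{k\choose i}$, while $\mathrm{ord}_2(\lambda_0-\lambda_1) \ge 1+\mathrm{ord}_2\!{k\choose i}$. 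Hence
\[
\mathrm{ord}_2(\lambda_0-\lambda_1) > \mathrm{ord}_2(\lambda_0-\lambda_k). \tag{$\ast$}
\]

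Next, suppose for contradiction that PST occurs on $J(2k,k,i)$. By Proposition~\ref{evals_partition_even_odd}, the index $1$ (being odd) lies in $\ii^-$, so Theorem~\ref{class_of_scheme_pst} forces $\mathrm{ord}_2(\lambda_0-\lambda_1)=\mathrm{ord}_2(\alpha)$. I would then split on the parity of $k$:

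If $k$ is odd, then $k\in\ii^-$ as well, and Theorem~\ref{class_of_scheme_pst} also forces $\mathrm{ord}_2(\lambda_0-\lambda_k)=\mathrm{ord}_2(\alpha)$. Combined with~$(\ast)$, this is an immediate contradiction. If $k$ is even, then $k\in\ii^+$, so Theorem~\ref{class_of_scheme_pst} requires $\mathrm{ord}_2(\lambda_0-\lambda_k) > \mathrm{ord}_2(\alpha) = \mathrm{ord}_2(\lambda_0-\lambda_1)$, which flatly contradicts~$(\ast)$. Either way, no PST can occur.

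The proof is short once one notices the right pair of eigenvalues to compare; the only mildly delicate point is that the hypothesis ${k\choose i}$ even makes $\mathrm{ord}_2(\lambda_0-\lambda_k)$ as small as possible (just $\mathrm{ord}_2{k\choose i}$) while the factor $2$ appearing in $\lambda_0-\lambda_1$ automatically pushes $\mathrm{ord}_2(\lambda_0-\lambda_1)$ strictly higher. The main obstacle, if any, is simply to confirm that this strict inequality is incompatible with \emph{both} possible parities of $k$, which the case split above handles uniformly.
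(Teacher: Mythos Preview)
Your proof is correct. The overall strategy matches the paper's---use Proposition~\ref{evals_partition_even_odd} to place $1\in\ii^-$, then exhibit an eigenvalue difference whose $2$-adic order is strictly smaller than $\ord(\lambda_0-\lambda_1)$ and invoke Theorem~\ref{class_of_scheme_pst}---but the specific comparison differs. You work directly with $\lambda_0-\lambda_k={k\choose i}\bigl({k\choose i}-(-1)^{k-i}\bigr)$ and then split on the parity of $k$ to decide whether $k\in\ii^+$ or $k\in\ii^-$. The paper instead brings in $\lambda_{k-1}$ and uses the difference $\lambda_k-\lambda_{k-1}=2(-1)^{k-i}{k-1\choose i}$: since $\alpha$ divides \emph{every} difference $\lambda_r-\lambda_s$, one gets $\ord(\alpha)\le\ord(\lambda_k-\lambda_{k-1})<\ord(\lambda_0-\lambda_1)$ without any case analysis on $k$. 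Your route is slightly leaner in that it only needs three eigenvalues $(\lambda_0,\lambda_1,\lambda_k)$ rather than four; conversely, the paper's trick of bounding $\ord(\alpha)$ via an arbitrary pairwise difference avoids the split entirely. In fact your own argument can be shortened the same way: from $\alpha\mid(\lambda_0-\lambda_k)$ one already has $\ord(\alpha)\le\ord(\lambda_0-\lambda_k)<\ord(\lambda_0-\lambda_1)$, contradicting $1\in\ii^-$, and the parity of $k$ never enters.
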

 \begin{proof}
 Using~(\ref{lambda_1}), we have   $\lambda_1={k \choose i}^2 - 2{k \choose i}{k-1\choose i}$ and according  to Proposition~\ref{evals_partition_even_odd},  $1\in \ii^-$ and 
\begin{equation}\label{lambda_k-lambda_k_1}
\lambda_0-\lambda_1={k\choose i}^2 - {k \choose i}^2 +2{k \choose i}{k-1\choose i}= 2{k \choose i}{k-1\choose i}.
 \end{equation}
If $\ord(\alpha)=t$, then for any $j=1,\ldots, k$, we must have $2^t| \lambda_0-\lambda_j$. In particular, $2^t | \lambda_0-\lambda_k$ and $2^t | \lambda_0-\lambda_{k-1}$. Hence,   $2^t | \lambda_k-\lambda_{k-1}$. That is, $t\leq \ord(\lambda_k-\lambda_{k-1})$. On the other hand, using~(\ref{lambda_k}) and~(\ref{lambda_k_1}) we have 
\[
 \lambda_k-\lambda_{k-1} = 2(-1)^{k-i}  {k-1\choose i};
\]
and since ${k\choose i}$ is even, we have $\ord(\lambda_k-\lambda_{k-1})<\ord(\lambda_0- \lambda_1)$. This means that $\ord(\alpha)<\ord(\lambda_0- \lambda_1)$. According to Theorem~\ref{class_of_scheme_pst}, this shows that $J(2k,k,i)$ cannot have PST.
 \end{proof}
 
To complete the proof, therefore, we should consider the  case where ${k \choose i}$ is odd. To this goal, we will need to make use of Lucas's famous  Theorem (see, for example~\cite{fine1947binomial}).
\begin{thm}\label{lucas_general_p}
Suppose $a$ and $b$ are non-negative integers and $p$ is a prime. Also let 
\[ a=a_rp^r + a_{r-1}p^{r-1} + \cdots+a_1 p + a_0\;\; \text{and} \;\; b=b_rp^r + b_{r-1}p^{r-1} + \cdots+b_1 p + b_0 \]
be the base-$p$ representation of $a$ and $b$, respectively. Then
\[ {a\choose b} \equiv \prod_{\ell=0}^r {a_\ell \choose b_\ell} \;\; (\mathrm{mod}\, p).\qed \]
\end{thm}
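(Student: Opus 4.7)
The plan is to prove Lucas's theorem by working in the polynomial ring $\mathbb{F}_p[x]$ and comparing coefficients of $x^b$ in two expansions of $(1+x)^a$. The algebraic heart of the argument is the \emph{Freshman's Dream}: the congruence $(1+x)^p \equiv 1 + x^p \pmod{p}$.

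To establish this, I would observe that for $0<k<p$ the binomial coefficient
\[
\binom{p}{k} = \frac{p(p-1)(p-2)\cdots(p-k+1)}{k!}
\]
has a factor of $p$ in its numerator which cannot be cancelled by $k!$, since every factor of $k!$ is strictly less than $p$. Hence $p \mid \binom{p}{k}$, so only the $k=0$ and $k=p$ terms of the binomial expansion of $(1+x)^p$ survive modulo $p$. A routine induction on $\ell$, raising the previous identity to the $p$-th power and reusing the Freshman's Dream, extends this to $(1+x)^{p^\ell} \equiv 1 + x^{p^\ell} \pmod{p}$ for every $\ell \geq 0$.

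Writing $a = \sum_{\ell=0}^{r} a_\ell p^\ell$ and assembling the factors yields
\[
(1+x)^a \;\equiv\; \prod_{\ell=0}^{r} (1 + x^{p^\ell})^{a_\ell} \;\equiv\; \prod_{\ell=0}^{r}\sum_{k_\ell=0}^{a_\ell} \binom{a_\ell}{k_\ell} x^{k_\ell p^\ell} \pmod{p}.
\]
Expanding the product, the coefficient of $x^b$ on the right is $\sum \prod_\ell \binom{a_\ell}{k_\ell}$, summed over tuples $(k_0,\ldots,k_r)$ with $0 \leq k_\ell \leq a_\ell$ satisfying $\sum_\ell k_\ell p^\ell = b$. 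Since each $a_\ell \leq p-1$, every $k_\ell$ is itself a valid base-$p$ digit, and uniqueness of the base-$p$ representation of $b$ forces $k_\ell = b_\ell$ for all $\ell$. If some $b_\ell > a_\ell$ then no such tuple exists, which matches the fact that $\binom{a_\ell}{b_\ell} = 0$ in that case. Comparing with the coefficient $\binom{a}{b}$ of $x^b$ in $(1+x)^a$ gives the claim.

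There is no real obstacle once the Freshman's Dream is available: the remainder is coefficient bookkeeping, and the only subtlety is the uniqueness-of-digits step that pins down the unique contributing tuple $(b_0,\ldots,b_r)$.
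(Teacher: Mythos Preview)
Your argument is the standard generating-function proof of Lucas's theorem and is correct as written; the only minor quibble is that the induction step ``raising the previous identity to the $p$-th power'' should more precisely read $(1+x)^{p^{\ell+1}} = \bigl((1+x)^{p^\ell}\bigr)^p \equiv (1+x^{p^\ell})^p \equiv 1+x^{p^{\ell+1}}$, but you clearly intend this.

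That said, there is nothing to compare against: the paper does not prove this theorem at all. It is stated with a terminal \qedsymbol\ and attributed to the literature (Fine, \emph{Binomial coefficients modulo a prime}, 1947). So your proof is not ``the same as'' or ``different from'' the paper's---the paper simply quotes Lucas's theorem as a known result and moves on to use its $p=2$ corollary.
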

For the case $p=2$, we get the following consequence of Theorem~\ref{lucas_general_p}.  We will write  $b\preccurlyeq a$ if  every digit of binary (i.e. base-$2$) representation of $b$ is less than or equal to the corresponding digit of $a$.
\begin{thm}\label{lucas_2}
For any non-negative $a$ and $b$, ${a\choose b}$ is odd if and only if $b\preccurlyeq a$.\qed
\end{thm}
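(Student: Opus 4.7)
The plan is to specialise Lucas's theorem (Theorem~\ref{lucas_general_p}) to the prime $p=2$ and read off the criterion digit by digit. Writing $a$ and $b$ in binary with the same number of digits $r+1$ (padding the shorter one with leading zeros), we have $a_\ell, b_\ell \in \{0,1\}$ for every $\ell$, and Lucas gives
\[ {a\choose b}\equiv \prod_{\ell=0}^r {a_\ell \choose b_\ell}\pmod 2. \]

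Next I would inspect the four possible values of each factor: ${0\choose 0}={1\choose 0}={1\choose 1}=1$ while ${0\choose 1}=0$. Therefore the product on the right is either $0$ (when at least one factor equals $0$) or $1$ (when every factor equals $1$); in particular it is odd if and only if none of the factors vanishes. A factor ${a_\ell\choose b_\ell}$ vanishes precisely when $a_\ell=0$ and $b_\ell=1$, i.e. precisely when the $\ell$-th binary digit of $b$ exceeds the $\ell$-th binary digit of $a$.

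Consequently $\binom{a}{b}$ is odd iff $b_\ell\le a_\ell$ for every $\ell\in\{0,\ldots,r\}$, which is exactly the condition $b\preccurlyeq a$. There is essentially no obstacle here; the only minor point to be careful about is choosing a common length for the two binary expansions so that the product over $\ell$ makes sense, after which the argument is purely digit-by-digit case analysis.
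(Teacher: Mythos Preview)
Your proof is correct and matches the paper's approach: the paper states this result as an immediate consequence of Lucas's theorem (Theorem~\ref{lucas_general_p}) for $p=2$ and gives no further details, so your digit-by-digit specialisation is exactly what is intended.
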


The following is an easy consequence of Theorem~\ref{lucas_2}.
\begin{cor}\label{lucas_cor}
If $a$ is even and $b$  is odd, then ${a\choose b}$ is even.
\end{cor}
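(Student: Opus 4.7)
The plan is to derive this directly from Theorem~\ref{lucas_2}, using the contrapositive formulation: ${a \choose b}$ is even precisely when $b \not\preccurlyeq a$, i.e.\ when some binary digit of $b$ strictly exceeds the corresponding digit of $a$. So it suffices to exhibit one such digit position.

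The natural choice is the units digit (the $2^0$ place). First I would note that $a$ being even means its binary representation $a = a_r a_{r-1}\cdots a_1 a_0$ has $a_0 = 0$, while $b$ being odd means $b_0 = 1$ in its binary representation. Hence $b_0 = 1 > 0 = a_0$, so it is not the case that every digit of $b$ is bounded by the corresponding digit of $a$; that is, $b \not\preccurlyeq a$.

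Applying Theorem~\ref{lucas_2}, we conclude that ${a \choose b}$ is not odd, hence even. (Equivalently, one can invoke Theorem~\ref{lucas_general_p} directly with $p = 2$: the factor ${a_0 \choose b_0} = {0 \choose 1} = 0$ appears in the product, forcing ${a \choose b} \equiv 0 \pmod{2}$.)

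There is no real obstacle here; the only thing to be careful about is citing the correct direction of Lucas's theorem (the ``iff'' in Theorem~\ref{lucas_2}) and observing that the relevant digit to test is the lowest-order one, which is exactly where the parity hypotheses on $a$ and $b$ give information.
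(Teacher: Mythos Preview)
Your proof is correct and is exactly the argument the paper has in mind: the corollary is stated without proof as ``an easy consequence of Theorem~\ref{lucas_2},'' and your observation that $a_0=0<1=b_0$ forces $b\not\preccurlyeq a$ is precisely that easy consequence.
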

 In what follows, we assume, always, that $i\in\{1,2,\ldots,k-1\}$.
 \begin{lem}\label{either_i_or_k-i_choose_l_is_even}
 If $\ki$ is odd, then, for any $\ell=1,\ldots, k-i$, either ${i\choose \ell}$ or ${k-i\choose \ell}$ is even.
 \end{lem}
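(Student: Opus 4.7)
The plan is to use Lucas's theorem in its base-$2$ form (Theorem~\ref{lucas_2}) to convert the statement into a purely combinatorial claim about binary representations. First I would translate the hypothesis $\binom{k}{i}$ is odd into the relation $i \preccurlyeq k$, and then observe the stronger fact that the binary representations of $i$ and $k-i$ are \emph{disjoint} (that is, they share no common $1$-bit). This is immediate: since $i \preccurlyeq k$, every $1$-bit of $i$ is a $1$-bit of $k$; subtracting $i$ from $k$ digit-by-digit in base~$2$ therefore produces no borrows, so the $1$-bits of $k-i$ are exactly the $1$-bits of $k$ that are not $1$-bits of $i$. In particular the supports of $i$ and $k-i$ are disjoint.

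Next I would argue by contradiction. Suppose that for some $\ell$ with $1 \le \ell \le k-i$, both $\binom{i}{\ell}$ and $\binom{k-i}{\ell}$ are odd. Applying Theorem~\ref{lucas_2} twice gives $\ell \preccurlyeq i$ and $\ell \preccurlyeq k-i$. But then every $1$-bit of $\ell$ would be simultaneously a $1$-bit of $i$ and of $k-i$, which is impossible unless $\ell = 0$. Since $\ell \ge 1$, this is a contradiction, and the lemma follows.

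I do not foresee a real obstacle here; the only subtle point is the observation that $\binom{k}{i}$ odd forces $i$ and $k-i$ to have disjoint binary supports. This is essentially Kummer's theorem (no carries in the base-$2$ addition of $i$ and $k-i$), but since the paper has only cited Lucas, I would prefer to derive it directly from $i \preccurlyeq k$ as above, so that the proof stays self-contained within the tools already introduced in the paper.
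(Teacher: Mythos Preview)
Your proposal is correct and is essentially the paper's own argument: both proceed by contradiction via Theorem~\ref{lucas_2}, using that $i\preccurlyeq k$ forces the binary supports of $i$ and $k-i$ to be disjoint, so no nonzero $\ell$ can satisfy $\ell\preccurlyeq i$ and $\ell\preccurlyeq k-i$ simultaneously. The only difference is cosmetic: you state the disjoint-support observation up front, whereas the paper derives it at a single digit $j$ via $(k-i)_j=k_j-i_j$ (which tacitly uses the same no-borrow fact you make explicit).
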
 
 \begin{proof}
Suppose ${i\choose \ell}$ and ${k-i\choose \ell}$ are both odd. Then, according to Theorem~\ref{lucas_2}, we  have $\ell\preccurlyeq i$ and $\ell\preccurlyeq k-i$. Since $\ell\neq 0$, there is a digit $\ell_j$ of the binary representation of $\ell$ such that $\ell_j=1$. Thus we must have $i_j=1$ and $(k-i)_j=1$. On the other hand, since $\ki$ is odd, we have $i\preccurlyeq k$, which implies that $i_j\leq k_j$, thus $k_j=1$. But then we have $(k-i)_j=k_j-i_j=1-1=0$, which is a contradiction. Thus the result follows.
 \end{proof}

 For the results bellow, recall the definition of $\alpha$ from~(\ref{alpha}).

\begin{lem}\label{ord_alpha_when_k_choose_i_is_odd}
If $\ki$ is odd, then $\ord(\alpha) \leq 1$.
\end{lem}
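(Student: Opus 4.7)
The plan is to produce an explicit $j\in\{1,\dots,k\}$ with $\ord(\lambda_0-\lambda_j)\le 1$; since $\alpha$ divides every $\lambda_0-\lambda_j$ by~(\ref{alpha}), this forces $\ord(\alpha)\le 1$.

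The first try is $j=1$. By~(\ref{lambda_1}), $\lambda_0-\lambda_1=2\binom{k}{i}\binom{k-1}{i}$, so if $\binom{k-1}{i}$ is odd (recall $\binom{k}{i}$ is odd by assumption) then $\ord(\lambda_0-\lambda_1)=1$ and we are finished. If instead $\binom{k-1}{i}$ is even, Pascal's identity forces $\binom{k-1}{i-1}=\binom{k}{i}-\binom{k-1}{i}$ to be odd, and I move on to $j=2$. Writing $a=\binom{k-2}{i-2}$, $b=\binom{k-2}{i-1}$, $c=\binom{k-2}{i}$, the specialization of~(\ref{evals_of_J(2k,k,i)}) at $j=2$ gives $\lambda_2=a^2-2b^2+c^2$, while $\binom{k}{i}=a+2b+c$ by two applications of Pascal; expanding yields
\[
\lambda_0-\lambda_2 \;=\; 2(3b^2+ac)+4b(a+c),
\]
whose $2$-adic valuation equals $1$ exactly when $b+ac$ is odd. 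The two constraints $a+b=\binom{k-1}{i-1}$ odd and $b+c=\binom{k-1}{i}$ even combine to force $a+c$ odd, whence $ac$ is even, and the criterion reduces to $b$ being odd.

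The only remaining pocket is $\binom{k-1}{i}$ and $b=\binom{k-2}{i-1}$ both even (together with the standing assumption $\binom{k}{i}$ odd). Here my plan is to iterate the same bookkeeping: apply Pascal once more to rewrite $a,b,c$ in terms of the $\binom{k-3}{*}$'s and examine $\lambda_0-\lambda_3\bmod 4$. A uniform alternative is to pick $j=k-2^{r}+1$ with $r=\lfloor\log_2 k\rfloor$, so that $k-j=2^r-1$ and by Theorem~\ref{lucas_2} every $\binom{k-j}{m}$ is odd; then $\lambda_j$ modulo~$4$ collapses to a partial alternating sum $\sum_{\ell\in R}(-1)^\ell\binom{j}{\ell}$, which can be evaluated through the identity $\sum_{\ell=0}^{m}(-1)^\ell\binom{j}{\ell}=(-1)^m\binom{j-1}{m}$ and then compared with $\lambda_0\equiv 1\pmod 4$.

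The main obstacle is the book-keeping in this residual case: one has to track the parities of the binomials $\binom{k-j}{i-j+\ell}$ appearing in~(\ref{evals_of_J(2k,k,i)}) via the binary expansions of $k$ and $i$, invoking Theorem~\ref{lucas_2} and Lemma~\ref{either_i_or_k-i_choose_l_is_even}, to guarantee that at least one candidate $j$ produces $\lambda_j\not\equiv 1\pmod 4$. The hypothesis $\binom{k}{i}$ odd (equivalently $i\preccurlyeq k$) is what makes this Lucas-theoretic case analysis terminate.
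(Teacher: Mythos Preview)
Your argument is correct and complete only in the first two branches (when $\binom{k-1}{i}$ is odd, or when $\binom{k-1}{i}$ is even but $\binom{k-2}{i-1}$ is odd). The residual branch---$\binom{k}{i}$ odd, $\binom{k-1}{i}$ even, $\binom{k-2}{i-1}$ even---is not proved: you only announce two possible strategies (iterate to $j=3$, or jump to $j=k-2^{r}+1$) and then describe what ``has to'' be tracked, without carrying either one out. Neither sketch is obviously routine. The iteration idea has no termination argument: you would need to show that for \emph{some} finite $j$ the parity constraints eventually force $\ord(\lambda_0-\lambda_j)=1$, and nothing you wrote establishes that. The $j=k-2^{r}+1$ idea does make every $\binom{k-j}{m}$ odd, but then $\lambda_j\bmod 4$ is $\sum_{\ell}(-1)^{\ell}\binom{j}{\ell}$ restricted to those $\ell$ with $0\le i-j+\ell\le k-j$, and controlling that truncated alternating sum modulo~$4$ still requires a genuine case analysis on the binary digits of $i$ and $k$ that you have not supplied.

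The paper sidesteps this cascading case analysis entirely by choosing a single pair of consecutive indices, namely $j=i$ and $j=i+1$, and proving directly that $\lambda_i-\lambda_{i+1}\equiv 2\pmod 4$. The computation expands $\lambda_{i+1}$ via Pascal's rule, telescopes against $\lambda_i$, and reduces to
\[
\lambda_i-\lambda_{i+1}\;=\;2\sum_{\ell=0}^{k-i-1}(-1)^{\ell}\binom{i}{\ell}\binom{k-i}{\ell}\binom{k-i-1}{\ell}\;=\;2+2\sum_{\ell\ge 1}(\cdots),
\]
and the tail is even because, by Lemma~\ref{either_i_or_k-i_choose_l_is_even}, for each $\ell\ge 1$ at least one of $\binom{i}{\ell}$, $\binom{k-i}{\ell}$ is even. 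Thus $\lambda_i-\lambda_{i+1}\equiv 2\pmod 4$, so $4$ cannot divide both $\lambda_0-\lambda_i$ and $\lambda_0-\lambda_{i+1}$, giving $\ord(\alpha)\le 1$ in one stroke. This is exactly the ``uniform alternative'' you were looking for, and it uses Lemma~\ref{either_i_or_k-i_choose_l_is_even} in a clean way rather than as a tool for an open-ended case split.
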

\begin{proof}
Using~(\ref{lambda_i+1}) and Pascal's rule, we have 
\begin{align*}
\lambda_{i+1}&= \sum_{\ell=0}^{k-i} (-1)^\ell {i \choose \ell} {k-i-1 \choose \ell-1}^2 + \sum_{\ell=0}^{k-i} (-1)^\ell {i \choose \ell-1} {k-i-1 \choose \ell-1}^2.
\end{align*}
Since $ {k-i\choose \ell}^2  = {k-i-1 \choose \ell}^2 +  {k-i-1 \choose \ell-1}^2 + 2 {k-i-1 \choose \ell}  {k-i-1 \choose \ell-1}$, we deduce that 

\begin{align*}
\lambda_{i+1}=& \sum_{\ell=0}^{k-i} (-1)^\ell {i \choose \ell} {k-i \choose \ell}^2 
- \sum_{\ell=0}^{k-i} (-1)^\ell {i \choose \ell} {k-i-1 \choose \ell}^2 \\[2pt]
&- 2 \sum_{\ell=0}^{k-i} (-1)^\ell {i \choose \ell} {k-i-1 \choose \ell} {k-i-1 \choose \ell-1} \\[2pt]
&+ \sum_{\ell=0}^{k-i} (-1)^\ell {i \choose \ell-1} {k-i-1 \choose \ell-1}^2\\[4pt]
 = & \lambda_i -  \sum_{\ell=0}^{k-i-1} (-1)^\ell {i \choose \ell} {k-i-1 \choose \ell}^2  + \sum_{\ell=1}^{k-i} (-1)^\ell {i \choose \ell-1} {k-i-1 \choose \ell-1}^2 \\[2pt]
&-2\sum_{\ell=0}^{k-i-1} (-1)^\ell {i \choose \ell} {k-i-1 \choose \ell} {k-i-1 \choose \ell-1}\\[4pt]
 = & \lambda_i -  \sum_{\ell=0}^{k-i-1} (-1)^\ell {i \choose \ell} {k-i-1 \choose \ell}^2  - \sum_{\ell=0}^{k-i-1} (-1)^\ell {i \choose \ell} {k-i-1 \choose \ell}^2 \\[2pt]
&-2\sum_{\ell=0}^{k-i-1} (-1)^\ell {i \choose \ell} {k-i-1 \choose \ell} {k-i-1 \choose \ell-1}\\[4pt]
= & \lambda_i -  2\sum_{\ell=0}^{k-i-1} (-1)^\ell \left[{i \choose \ell} {k-i-1 \choose \ell}^2  + {i \choose \ell} {k-i-1 \choose \ell} {k-i-1 \choose \ell-1}\right]\\[4pt]
= & \lambda_i -  2\sum_{\ell=0}^{k-i-1} (-1)^\ell {i \choose \ell} {k-i-1 \choose \ell} \left[ {k-i-1 \choose \ell} + {k-i-1 \choose \ell-1}\right]\\[4pt]
= & \lambda_i -  2\sum_{\ell=0}^{k-i-1} (-1)^\ell {i \choose \ell} {k-i \choose \ell} {k-i-1 \choose \ell} \\[2pt]
=&  \lambda_i - 2 - 2\sum_{\ell=1}^{k-i-1} (-1)^\ell {i \choose \ell} {k-i \choose \ell} {k-i-1 \choose \ell} \equiv \lambda_i- 2 -0\quad  (\mathrm{mod}\, 4).
\end{align*}
The last line follows from the fact  that according to Lemma~\ref{either_i_or_k-i_choose_l_is_even}, all the summands  in the last summation are even. This shows that $\lambda_i - \lambda_{i+1}\equiv -2\equiv 2$ (mod $4$). Now if $\ord(\alpha)>1$, then  $4 | \lambda_0 - \lambda_i$ and $4| \lambda_0 - \lambda_{i+1}$ and hence  $4 |  \lambda_i - \lambda_{i+1}$, which is a contradiction. Therefore $\ord(\alpha)\leq 1$.
\end{proof}

\begin{lem}\label{no_pst_when_k_choose_i_odd_and_k_1_choose_i_even}
If $\ki$  is odd and  $\kki$ is even, then $J(2k,k,i)$ cannot have PST.
\end{lem}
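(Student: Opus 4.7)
The plan is to combine three facts that have already been established: (i) the formula $\lambda_0 - \lambda_1 = 2\binom{k}{i}\binom{k-1}{i}$ from equations~(\ref{lambda_0}) and~(\ref{lambda_1}); (ii) the bound $\mathrm{ord}_2(\alpha)\leq 1$ from Lemma~\ref{ord_alpha_when_k_choose_i_is_odd}, which applies since $\binom{k}{i}$ is assumed odd; and (iii) the fact that $1\in\ii^-$, which follows from Proposition~\ref{evals_partition_even_odd} since $1$ is odd. With these in hand the argument is essentially a parity count.

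Specifically, I would compute $\mathrm{ord}_2(\lambda_0-\lambda_1)=1+\mathrm{ord}_2\binom{k}{i}+\mathrm{ord}_2\binom{k-1}{i}$. The hypothesis that $\binom{k}{i}$ is odd gives the middle term $0$, while the hypothesis that $\binom{k-1}{i}$ is even gives the last term at least $1$. Therefore $\mathrm{ord}_2(\lambda_0-\lambda_1)\geq 2$.

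Assume for contradiction that PST occurs on $J(2k,k,i)$. Since $1\in\ii^-$, Theorem~\ref{class_of_scheme_pst} forces the equality $\mathrm{ord}_2(\lambda_0-\lambda_1)=\mathrm{ord}_2(\alpha)$. Combining this with Lemma~\ref{ord_alpha_when_k_choose_i_is_odd} yields $\mathrm{ord}_2(\lambda_0-\lambda_1)=\mathrm{ord}_2(\alpha)\leq 1$, which contradicts the bound $\mathrm{ord}_2(\lambda_0-\lambda_1)\geq 2$ from the previous step.

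There is no real obstacle here; the work has all been done in Lemma~\ref{ord_alpha_when_k_choose_i_is_odd} and in the explicit evaluations of $\lambda_0$ and $\lambda_1$. The one thing to be mindful of is to invoke Proposition~\ref{evals_partition_even_odd} correctly so that $1$ lies in $\ii^-$ and hence the \emph{equality} clause (rather than the strict inequality) of Theorem~\ref{class_of_scheme_pst} applies. Everything else is a one-line $2$-adic valuation computation.
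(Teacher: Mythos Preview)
Your proposal is correct and follows essentially the same route as the paper's own proof: compute $\ord(\lambda_0-\lambda_1)\geq 2$ from $\lambda_0-\lambda_1=2\binom{k}{i}\binom{k-1}{i}$, invoke Lemma~\ref{ord_alpha_when_k_choose_i_is_odd} to get $\ord(\alpha)\leq 1$, and use $1\in\ii^-$ together with Theorem~\ref{class_of_scheme_pst} to obtain the contradiction. The paper compresses this into a single sentence citing equation~(\ref{lambda_k-lambda_k_1}), but the logic is identical.
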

\begin{proof}
According to~(\ref{lambda_k-lambda_k_1}), we have $\ord(\lambda_0-\lambda_1)\geq 2$; hence using  Lemma~\ref{ord_alpha_when_k_choose_i_is_odd}, Theorem~\ref{class_of_scheme_pst} and the fact that $1\in \ii^-$, the result follows.
\end{proof}

It remains to consider the case where $\ki$ and  ${k-1\choose i}$ are both odd. According to Corollary~\ref{lucas_cor}, these imply that  $i$ is even.  To show the result in this case, we will make use of the following lemmas.

\begin{lem}\label{triple_binom}
If $\ki$ is odd, then for any $\ell=0,1,\ldots, k-i$, the following  product is even:
\[  {k-i \choose \ell}{i\choose k-i-\ell}{i-2\choose k-i-\ell-2}. \]
\end{lem}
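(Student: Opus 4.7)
The plan is to introduce the shorthand $m = k-i-\ell$, so that $\ell + m = k-i$ and the product under consideration becomes
\[ {k-i \choose \ell}{i \choose m}{i-2 \choose m-2}. \]
The key idea is to show that as soon as the first two factors are simultaneously odd, the integer $m$ is forced to be $0$, in which case the third factor ${i-2 \choose m-2}$ vanishes and the product is trivially even.

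First I would dispose of the degenerate range $m \in \{0,1\}$ separately: in that case $m-2<0$, so ${i-2\choose m-2}=0$ by the standard convention ${a\choose b}=0$ when $b<0$, and the product is zero. For $m \geq 2$, I would argue by contradiction, supposing that all three binomial coefficients are odd. Applying Theorem~\ref{lucas_2} to ${k-i\choose \ell}={\ell + m\choose \ell}$ gives $\ell \preccurlyeq k-i$, which is equivalent (via Kummer's carry criterion, or just by bit-counting the identity $\ell+m=k-i$) to saying that $\ell$ and $m$ have disjoint binary supports whose union is the support of $k-i$; in particular $m \preccurlyeq k-i$. Applying Theorem~\ref{lucas_2} to ${i\choose m}$ gives $m \preccurlyeq i$. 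Finally, the standing hypothesis that $\ki$ is odd, combined once more with Theorem~\ref{lucas_2}, yields $i \preccurlyeq k$, and therefore the binary representations of $i$ and $k-i$ have disjoint supports (they sum to $k$ with no carries). Thus $m$ would be dominated in the partial order $\preccurlyeq$ by both $i$ and $k-i$, which have disjoint bit-patterns; this forces $m=0$, contradicting $m \geq 2$. Hence at least one of the first two factors is even, and the product is even.

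I expect the main step to be nothing more than the observation that $\ki$ odd forces the bit-patterns of $i$ and $k-i$ to be disjoint; once this is in hand, the third binomial ${i-2\choose m-2}$ never actually needs to be examined except in the trivial small-$m$ regime, and the whole argument collapses to a short application of Lucas's theorem. There is no combinatorially delicate estimate here; the only real care required is invoking the convention ${a\choose b}=0$ for $b<0$ to handle the $m \leq 1$ cases cleanly.
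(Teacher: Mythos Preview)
Your proposal is correct and follows essentially the same approach as the paper: both arguments use Lucas's theorem to show that if $\binom{k-i}{\ell}$ and $\binom{i}{k-i-\ell}$ are simultaneously odd then $k-i-\ell=0$, whence the third factor $\binom{i-2}{k-i-\ell-2}$ vanishes. Your formulation via ``$i$ and $k-i$ have disjoint binary supports, and $m$ is $\preccurlyeq$ both, hence $m=0$'' is a cleaner packaging of the same idea; the paper instead tracks the bits position by position (starting from the least significant set bit of $i$) to reach the identical conclusion $k-i-\ell=0$. One cosmetic point: in the $m\geq 2$ case you assume all three factors are odd but only use the first two, so you may as well drop the third from the hypothesis for contradiction.
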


\begin{proof}
 Let $s$ be the smallest index for which $i_s$, the $s$-th digit of $i$ in the binary representation, is equal to $1$. As $i\preccurlyeq k$, we  have $k_s=1$; thus $(k-i)_s=0$.  Suppose $ {k-i \choose \ell}$ and ${i\choose k-i-\ell}$ are both odd. Since  $\ell\preccurlyeq k-i$,  it follows that $\ell_s=0$ and the binary representation of $k-i-\ell$ will be 
\[ k-i-\ell=\cdots (k_{s+1}-i_{s+1}-\ell_{s+1})  0 (k_{s-1}-\ell_{s-1})\cdots (k_{0}-\ell_{0}). \] 
Now  $k-i-\ell\preccurlyeq i$ implies, first, that
\[ k-i-\ell=\cdots (k_{s+1}-i_{s+1}-\ell_{s+1})  0\cdots 0. \] 
It, further, implies that for every $t\geq s+1$, if $i_t=0$, then  $k_{t}-i_{t}-\ell_{t}=0$ and if $i_t=1$, then $k_{t}=1$ from which we deduce that $k_{t}-i_t=\ell_t=0$ which, again, implies that $k_{t}-i_{t}-\ell_{t}=0$. Thus $ k-i-\ell=0$ and   ${i-2\choose k-i-\ell-2}=0$; therefore the result follows.
\end{proof}

To prove the next result, we will make use of the well-known Vandermode Convolution identity: let $a,b$ and $c$ be non-negative integers. Then
\begin{equation}\label{vandermonde}
 \sum_{\ell=0}^a {a\choose \ell} {b\choose c-\ell} = {a+b\choose c}.
\end{equation}

\begin{lem}\label{summation_is_odd}
If $\ki$ and $\kki$ are odd and  $k$ is even, then the summation
\[S= \sum_{\substack{\ell=0 \\ \ell : \text{even}}}^{k-i-2}  {k-i \choose \ell} {i-2 \choose k-i-\ell-2}^2  \]
is odd.
\end{lem}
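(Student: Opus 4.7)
My plan is to reduce $S\mtwo$ to a single binomial coefficient, and then invoke Lucas's theorem (Theorem~\ref{lucas_2}) to verify its parity.

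First I would exploit the fact that $x^2\equiv x\mtwo$ for every integer $x$ to drop the squares in $S$ modulo $2$, reducing the problem to
\[
S\equiv\sum_{\substack{\ell=0\\ \ell\text{ even}}}^{k-i-2}{k-i\choose\ell}{i-2\choose k-i-\ell-2}\mtwo.
\]
By the hypotheses (and Corollary~\ref{lucas_cor} applied to ${k\choose i}$ with $k$ even), $i$ must be even, and since $i\in\{1,\ldots,k-1\}$ we have $i\geq 2$. Thus $k-i$ and $i-2$ are both even; writing $k-i=2n$, $i=2j$ and $\ell=2m$, every binomial above has the form ${2a\choose 2b}$. Since the binary expansion of $2a$ is that of $a$ with a trailing $0$, Theorem~\ref{lucas_2} gives ${2a\choose 2b}\equiv{a\choose b}\mtwo$, and so
\[
S\equiv\sum_{m=0}^{n-1}{n\choose m}{j-1\choose n-m-1}\mtwo.
\]

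Next I would apply the Vandermonde convolution~(\ref{vandermonde}). Substituting $r=m+1$ and using ${j-1\choose r-1}={j-1\choose j-r}$, the sum collapses:
\[
\sum_{m=0}^{n-1}{n\choose m}{j-1\choose n-m-1}=\sum_{r=1}^{n}{n\choose r}{j-1\choose j-r}={n+j-1\choose j}.
\]
Since $n+j-1=(k-i)/2+i/2-1=k/2-1$, this yields $S\equiv{k/2-1\choose i/2}\mtwo$.

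Finally I would verify that ${k/2-1\choose i/2}$ is odd, i.e.\ that $i/2\preccurlyeq k/2-1$ in binary. Write $k=2^{s+1}m$ with $m$ odd (possible since $k$ is even). Then $k/2-1=2^sm-1$ carries the digit $1$ in positions $0,\ldots,s-1$, the digit $0$ in position $s$, and has higher digits identical to those of $k/2$. From ${k\choose i}$ odd one has $i\preccurlyeq k$, which forces bits $0,\ldots,s$ of $i$ to vanish (as those bits of $k$ are $0$). The standard computation of $k-1$ shows that bit $s+1$ of $k-1$ is $0$; hence ${k-1\choose i}$ odd forces bit $s+1$ of $i$ to vanish as well. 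Therefore $2^{s+2}\mid i$, so $i/2$ has bits $0,\ldots,s$ equal to $0$, and a position-by-position comparison with $k/2-1$ confirms $i/2\preccurlyeq k/2-1$, completing the argument.

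The main obstacle, I expect, is this last bit-tracking step: one must carefully align the binary expansions of $i$, $k$, $i/2$, and $k/2-1$ and use \emph{both} parity hypotheses on ${k\choose i}$ and ${k-1\choose i}$ to pin down enough digits of $i$. By contrast, the algebraic steps leading to ${k/2-1\choose i/2}$ are essentially automatic once one spots the identity ${2a\choose 2b}\equiv{a\choose b}\mtwo$ and the subsequent Vandermonde collapse.
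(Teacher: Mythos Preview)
Your argument is correct. Both you and the paper drop the squares modulo $2$ and then collapse the sum via the Vandermonde identity~(\ref{vandermonde}); the two routes differ only in \emph{where} Vandermonde is applied and in how the resulting binomial coefficient is shown to be odd.

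The paper first reinstates the odd values of $\ell$ (which contribute $0\mtwo$ since $k-i$ is even, by Corollary~\ref{lucas_cor}), then applies Vandermonde directly at the original scale to obtain
\[
S\equiv\sum_{\ell}{k-i\choose\ell}{i-2\choose k-i-\ell-2}={k-2\choose i}\mtwo,
\]
and finishes with a single line of Pascal: ${k-2\choose i}={k-1\choose i}-{k-2\choose i-1}$, where ${k-1\choose i}$ is odd by hypothesis and ${k-2\choose i-1}$ is even by Corollary~\ref{lucas_cor} (as $k-2$ is even and $i-1$ is odd). You instead halve all indices via the Lucas identity ${2a\choose 2b}\equiv{a\choose b}\mtwo$ before applying Vandermonde, landing at ${k/2-1\choose i/2}$, and then carry out a careful bit-by-bit verification of $i/2\preccurlyeq k/2-1$. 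Since ${k-2\choose i}={2(k/2-1)\choose 2(i/2)}\equiv{k/2-1\choose i/2}\mtwo$, the two targets are literally the same parity, so your detour through halved indices is harmless but unnecessary; the paper's Pascal-based endgame is considerably shorter and uses the hypothesis on ${k-1\choose i}$ in a more transparent way. One small remark: your substitution ``$r=m+1$'' does not actually produce the displayed sum $\sum_{r=1}^{n}{n\choose r}{j-1\choose j-r}$ (the correct substitution is $r=n-m$), but the final equality $\sum_{m=0}^{n-1}{n\choose m}{j-1\choose n-m-1}={n+j-1\choose j}$ is of course right by Vandermonde, so this is only an expository slip.
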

\begin{proof}
Since $k-i$ is even,  according to Corollary~\ref{lucas_cor}, we have $S\equiv S'$ (mod $2$), where
\[S'= \sum_{\ell=0 }^{k-i}  {k-i \choose \ell} {i-2 \choose k-i-\ell-2}^2,   \]
where $\ell$ ranges over all the numbers (even and odd). On the other hand, since for any integer $x$, we have $x\equiv x^2$ (mod $2$), we can write
\begin{align*}
S'&\equiv  \sum_{\ell=0 }^{k-i}  {k-i \choose \ell} {i-2 \choose k-i-\ell-2}  \mtwo.
\end{align*}
 Then using~(\ref{vandermonde}) we deduce that 
\[S'\equiv {k-2\choose i}\mtwo.\]
On the other hand, by the assumption that $\kki$ is odd  and using Corollary~\ref{lucas_cor}, we have 
\[ {k-2\choose i}=\kki - {k-2\choose i-1} \equiv 1 \mtwo.  \]
Therefore the result follows.
\end{proof}

\begin{lem}\label{lambda_{k-i+2}-lambda_{k-i}}
 If $\ki$ and $\kki$ are both odd and $k$ is even, then 
\[\lambda_{k-i+2}-\lambda_{k-i}\equiv 2 \quad (\mathrm{mod} \;4). \]
\end{lem}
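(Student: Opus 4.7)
The plan is to compute $\lambda_{k-i+2}-\lambda_{k-i}$ modulo $4$ by expanding both eigenvalues via Lemma~\ref{evals_of_J(2k,k,i)} and reducing each summand via Pascal's rule. Setting $m=k-i$,
\[\lambda_{k-i}=\sum_{\ell=0}^{m}(-1)^\ell\binom{m}{\ell}\binom{i}{m-\ell}^2,\qquad \lambda_{k-i+2}=\sum_{\ell=0}^{m}(-1)^\ell\binom{m+2}{\ell}\binom{i-2}{m-\ell}^2,\]
so the two sums are indexed the same way and can be compared term by term.

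First I would apply Pascal twice to write $\binom{m+2}{\ell}=\binom{m}{\ell}+2\binom{m}{\ell-1}+\binom{m}{\ell-2}$ and $\binom{i}{m-\ell}=\binom{i-2}{m-\ell}+2\binom{i-2}{m-\ell-1}+\binom{i-2}{m-\ell-2}$. Squaring the latter and discarding multiples of $4$ gives
\[\binom{i}{m-\ell}^2\equiv\binom{i-2}{m-\ell}^2+2\binom{i-2}{m-\ell}\binom{i-2}{m-\ell-2}+\binom{i-2}{m-\ell-2}^2\pmod 4.\]
Substituting both expansions into the per-$\ell$ difference $\binom{m+2}{\ell}\binom{i-2}{m-\ell}^2-\binom{m}{\ell}\binom{i}{m-\ell}^2$, the $\binom{m}{\ell}\binom{i-2}{m-\ell}^2$ pieces cancel. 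The awkward mixed term $2\binom{m}{\ell}\binom{i-2}{m-\ell}\binom{i-2}{m-\ell-2}$ is then converted via Lemma~\ref{triple_binom}: since $\binom{i}{m-\ell}\equiv\binom{i-2}{m-\ell}+\binom{i-2}{m-\ell-2}\pmod 2$ and $\binom{m}{\ell}\binom{i}{m-\ell}\binom{i-2}{m-\ell-2}$ is even, we obtain $\binom{m}{\ell}\binom{i-2}{m-\ell}\binom{i-2}{m-\ell-2}\equiv\binom{m}{\ell}\binom{i-2}{m-\ell-2}^2\pmod 2$, so the mixed term contributes $2\binom{m}{\ell}\binom{i-2}{m-\ell-2}^2$ modulo $4$.

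Summing with alternating signs and re-indexing $\ell\mapsto\ell+1$ and $\ell\mapsto\ell+2$ in two of the surviving pieces, the difference collapses to
\[\lambda_{k-i+2}-\lambda_{k-i}\equiv 2(B-A)\pmod 4,\]
where $A=\sum_\ell (-1)^\ell\binom{m}{\ell}\binom{i-2}{m-\ell-1}^2$ and $B=\sum_\ell (-1)^\ell\binom{m}{\ell}\binom{i-2}{m-\ell-2}^2$. It remains to show $B-A$ is odd. Since $k$ is even and $\binom{k}{i}$ is odd, $i$ is even, so $m=k-i$ is even; Corollary~\ref{lucas_cor} then kills all odd-$\ell$ contributions to $B$ modulo $2$, reducing $B$ to the sum $S$ of Lemma~\ref{summation_is_odd}, which is odd. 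For $A$, replacing the squared binomial by $\binom{i-2}{m-\ell-1}$ via $x^2\equiv x\pmod 2$ and applying Vandermonde~(\ref{vandermonde}) gives $A\equiv\binom{k-2}{k-i-1}\pmod 2$, which is even by Corollary~\ref{lucas_cor} since $k-2$ is even and $k-i-1$ is odd. Hence $B-A$ is odd and $\lambda_{k-i+2}-\lambda_{k-i}\equiv 2\pmod 4$.

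The main obstacle is the mod-$4$ bookkeeping after the two Pascal expansions collide: one must track six products carefully, and the decisive input is Lemma~\ref{triple_binom}, which lets the mixed product $\binom{i-2}{m-\ell}\binom{i-2}{m-\ell-2}$ be traded (modulo $2$, hence modulo $4$ after multiplying by $2$) for the square $\binom{i-2}{m-\ell-2}^2$. Once that substitution is made, the argument cleanly reduces to the evenness of $\binom{k-2}{k-i-1}$ and the oddness supplied by Lemma~\ref{summation_is_odd}.
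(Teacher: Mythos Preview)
Your argument is correct and follows essentially the same route as the paper: expand via Pascal's rule, invoke Lemma~\ref{triple_binom} to trade the mixed product for a square, and reduce everything to the parity statement of Lemma~\ref{summation_is_odd}. The only organizational difference is that the paper first observes that all odd-$\ell$ summands are divisible by $4$ (since $k-i$, $k-i+2$, $i$, $i-2$ are all even, Corollary~\ref{lucas_cor} makes each factor even), and so restricts to even $\ell$ before expanding; this kills your auxiliary sum $A$ at the outset and leads directly to $2S$. You instead carry the odd-$\ell$ terms along, obtain the extra alternating sum $A$, and dispose of it at the end via Vandermonde and Corollary~\ref{lucas_cor}. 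Both routes work; the paper's is slightly shorter, while yours makes the role of the Vandermonde identity a bit more visible. One cosmetic point: after your reindexing the exact expression is $-2(A+B)$ rather than $2(B-A)$, but since $-2x\equiv 2x\pmod 4$ these coincide modulo~$4$ and your parity analysis of $A$ and $B$ settles it either way.
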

\begin{proof}
Recall that the assumptions imply that $i$ is even. We start by writing
\begin{align*}
\lambda_{k-i+2}-\lambda_{k-i}&= \sum_{\ell=0}^{k-i} (-1)^\ell\left[ {k-i+2 \choose \ell} {i-2 \choose k-i-\ell}^2 - {k-i \choose \ell} {i \choose k-i-\ell}^2\right].
\end{align*}
It is obvious (noting Theorem~\ref{lucas_2}) that  the summands   of this summation, for any odd $\ell$, are divisible by $4$; thus we may write 
\begin{align*}
\lambda_{k-i+2}-\lambda_{k-i}&\equiv  \sum_{\substack{\ell=0 \\ \ell : \text{even}}}^{k-i}  {k-i+2 \choose \ell} {i-2 \choose k-i-\ell}^2 - {k-i \choose \ell} {i \choose k-i-\ell}^2 \mm,
\end{align*}
Using Pascal's rule and Corollary~\ref{lucas_cor}, for any such $\ell$, we can write
\begin{align*}
&{k-i+2 \choose \ell} {i-2 \choose k-i-\ell}^2  \equiv {k-i\choose \ell} {i-2 \choose k-i-\ell}^2 +   {k-i\choose \ell-2} {i-2 \choose k-i-\ell}^2  \\[3pt]
& \equiv {k-i\choose \ell} {i \choose k-i-\ell}^2 +   {k-i\choose \ell} {i-2 \choose k-i-\ell-2}^2\\[3pt]
& - 2 {k-i\choose \ell} {i \choose k-i-\ell}{i-2 \choose k-i-\ell-2} +{k-i\choose \ell-2} {i-2 \choose k-i-\ell}^2  \mm. \\
\end{align*}
Therefore, using Lemma~\ref{triple_binom}, we can write 
\begin{align*}
&\lambda_{k-i+2}-\lambda_{k-i}\equiv  \sum_{\substack{\ell=0 \\ \ell : \text{even}}}^{k-i}  {k-i \choose \ell} {i-2 \choose k-i-\ell-2}^2  + {k-i \choose \ell-2} {i-2 \choose k-i-\ell}^2 \\[3pt]
& \equiv  \sum_{\substack{\ell=0 \\ \ell : \text{even}} }^{k-i-2}  {k-i \choose \ell} {i-2 \choose k-i-\ell-2}^2  + \sum_{\substack{\ell=2 \\ \ell : \text{even}} }^{k-i} {k-i \choose \ell-2} {i-2 \choose k-i-\ell}^2 \\[3pt]
& \equiv  \sum_{\substack{\ell=0 \\ \ell : \text{even}} }^{k-i-2}  {k-i \choose \ell} {i-2 \choose k-i-\ell-2}^2  + \sum_{\substack{\ell=0 \\ \ell : \text{even}} }^{k-i-2} {k-i \choose \ell} {i-2 \choose k-i-\ell-2}^2\\[3pt]
& \equiv 2\, \sum_{\substack{\ell=0 \\ \ell : \text{even}}}^{k-i-2}  {k-i \choose \ell} {i-2 \choose k-i-\ell-2}^2   \mm,
\end{align*}
 This, according to Lemma~\ref{summation_is_odd}, completes the proof.
\end{proof}

Using a similar approach, one can  prove the following result.
\begin{lem}\label{lambda_{k-i+1}-lambda_{k-i-1}}
 If $\ki$, $\kki$ and $k$ are odd, then 
\[\lambda_{k-i+1}-\lambda_{k-i-1}\equiv 2 \quad (\mathrm{mod} \;4). \qed\]
\end{lem}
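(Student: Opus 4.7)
\noindent\textit{Proof plan.}
The plan is to mirror the proof of Lemma~\ref{lambda_{k-i+2}-lambda_{k-i}}, with the parities of $k$ and $k-i$ interchanged. First, I would observe that the hypotheses force $i$ to be even: from $\binom{k-1}{i}$ odd, Theorem~\ref{lucas_2} gives $i \preccurlyeq k-1$, and $k$ odd means $(k-1)_0 = 0$, hence $i_0 = 0$. Consequently $k-i$ is odd while $k-i+1$ and $k-i-1$ are both even, so by Proposition~\ref{evals_partition_even_odd} both indices lie in $\ii^+_T$; this is exactly what makes a nonzero $\lambda_{k-i+1} - \lambda_{k-i-1} \pmod 4$ a useful obstruction to PST.

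Next, using Lemma~\ref{evals_of_J(2k,k,i)}, I would write
\[
\lambda_{k-i+1} - \lambda_{k-i-1} = \sum_{\ell=0}^{k-i}(-1)^\ell \left[\binom{k-i+1}{\ell}\binom{i-1}{k-i-\ell}^2 - \binom{k-i-1}{\ell}\binom{i+1}{k-i-\ell}^2\right].
\]
For odd $\ell$, Corollary~\ref{lucas_cor} makes both $\binom{k-i\pm 1}{\ell}$ even (since $k-i\pm 1$ are even); combining this with the Pascal-iterated identities $\binom{k-i+1}{\ell} \equiv \binom{k-i-1}{\ell} + \binom{k-i-1}{\ell-2} + 2\binom{k-i-1}{\ell-1} \pmod 4$ and $\binom{i+1}{m}^{2} \equiv \bigl(\binom{i-1}{m} + \binom{i-1}{m-2}\bigr)^{2} \pmod 4$, the odd-$\ell$ block should collapse to a multiple of $4$.

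For even $\ell$ I would perform the same two Pascal expansions, expand the squares modulo $4$, and apply a shifted analog of Lemma~\ref{triple_binom} (namely, when $\binom{k}{i}$ is odd, each product $\binom{k-i-1}{\ell}\binom{i-1}{k-i-\ell}\binom{i-3}{k-i-\ell-2}$ is even, proved identically to Lemma~\ref{triple_binom} with shifted indices) to discard the triple-binomial cross terms. After cancellation this should reduce the difference to $2S \pmod{4}$ for a sum $S$ of the shape appearing in Lemma~\ref{summation_is_odd}. A Vandermonde argument identical in spirit, using $x \equiv x^{2} \pmod{2}$ and~(\ref{vandermonde}), then reduces $S$ mod $2$ to a single binomial coefficient whose parity is read off from the hypotheses via $\binom{k-1}{i-1} = \binom{k}{i} - \binom{k-1}{i}$.

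The main obstacle I expect is the odd-$\ell$ step. In Lemma~\ref{lambda_{k-i+2}-lambda_{k-i}} both $\binom{i-2}{k-i-\ell}$ and $\binom{i}{k-i-\ell}$ are automatically even for odd $\ell$ (as $i$ is even and $k-i-\ell$ is odd), so the odd-$\ell$ terms there are individually divisible by $4$. Here the analogous binomials $\binom{i \pm 1}{k-i-\ell}$ have odd top and even bottom entries, so no pointwise divisibility holds and the mod-$4$ vanishing must be extracted from cancellation between the two subsummands, making the bookkeeping notably more delicate than in the even-$k$ case.
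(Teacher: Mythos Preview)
The paper gives no proof of this lemma at all; it simply writes ``Using a similar approach, one can prove the following result'' and leaves the details to the reader, so your plan to mirror the proof of Lemma~\ref{lambda_{k-i+2}-lambda_{k-i}} with the parities of $k$ and $k-i$ swapped is exactly what the paper intends. You have also correctly isolated the one place where the analogy is not mechanical---in the even-$k$ lemma the odd-$\ell$ summands are each divisible by $4$ because $\binom{i}{k-i-\ell}$ and $\binom{i-2}{k-i-\ell}$ are even-choose-odd, whereas here the corresponding factors $\binom{i\pm 1}{k-i-\ell}$ need not be even, so the mod-$4$ vanishing must come from cancellation rather than termwise; this is precisely the extra bookkeeping the paper suppresses.
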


Now we are ready to prove the result in the remaining case.

\begin{lem}\label{no_pst_when_k_choose_i_and_k_1_choose_i_are_odd}
If $\ki$  and  $\kki$ are both odd, then $J(2k,k,i)$ cannot have PST.
\end{lem}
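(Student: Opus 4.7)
The plan is to combine equation~(\ref{lambda_k-lambda_k_1}), Lemma~\ref{ord_alpha_when_k_choose_i_is_odd}, Proposition~\ref{evals_partition_even_odd} and Theorem~\ref{class_of_scheme_pst} with the two mod-$4$ calculations in Lemmas~\ref{lambda_{k-i+2}-lambda_{k-i}} and~\ref{lambda_{k-i+1}-lambda_{k-i-1}}. First, note that Corollary~\ref{lucas_cor} applied to whichever of $k$ and $k-1$ is even forces $i$ to be even (if $i$ were odd then exactly one of $\ki$, $\kki$ would be of the form ``even over odd'' and thus even, contradicting the hypothesis). Next, since $\ki$ and $\kki$ are both odd, equation~(\ref{lambda_k-lambda_k_1}) reads
\[
\lambda_0-\lambda_1 = 2\ki\kki,
\]
so $\ord(\lambda_0-\lambda_1)=1$.

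Now suppose, for a contradiction, that PST occurs on $J(2k,k,i)$. Proposition~\ref{evals_partition_even_odd} places $1\in \ii^-$, and Lemma~\ref{ord_alpha_when_k_choose_i_is_odd} already gives $\ord(\alpha)\leq 1$; hence Theorem~\ref{class_of_scheme_pst} forces $\ord(\alpha)=\ord(\lambda_0-\lambda_1)=1$, and consequently $4\mid \lambda_0-\lambda_j$ for every even $j\in\ii^+$. It therefore suffices to exhibit two even indices $j_1,j_2\in\{0,\ldots,k\}$ with $\lambda_{j_2}-\lambda_{j_1}\not\equiv 0 \mm$.

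This is where the parity of $k$ is split. If $k$ is even then $i$ is also even with $2\leq i\leq k-2$, so $j_1=k-i$ and $j_2=k-i+2$ are both even and lie in $\{0,\ldots,k\}$; Lemma~\ref{lambda_{k-i+2}-lambda_{k-i}} then supplies $\lambda_{k-i+2}-\lambda_{k-i}\equiv 2 \mm$, the desired contradiction. If $k$ is odd then $k-i$ is odd, and taking $j_1=k-i-1$ and $j_2=k-i+1$ (both even, and in range since $2\leq i\leq k-1$), Lemma~\ref{lambda_{k-i+1}-lambda_{k-i-1}} yields $\lambda_{k-i+1}-\lambda_{k-i-1}\equiv 2 \mm$, again contradicting the divisibility $4\mid \lambda_{j_2}-\lambda_{j_1}$ that PST would force. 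The heavy lifting has already been done in those two preceding lemmas, so the real obstacle was not the present step but the mod-$4$ congruences themselves; once those are in hand, this lemma is a short $2$-adic accounting.
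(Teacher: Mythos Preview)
Your proof is correct and follows essentially the same route as the paper: establish $\ord(\alpha)=1$ from $1\in\ii^-$ and the oddness of $\ki\kki$, then split on the parity of $k$ and invoke Lemmas~\ref{lambda_{k-i+2}-lambda_{k-i}} and~\ref{lambda_{k-i+1}-lambda_{k-i-1}} to find two indices in $\ii^+$ whose eigenvalue difference is $2\mm$. The only cosmetic difference is that your appeal to Lemma~\ref{ord_alpha_when_k_choose_i_is_odd} is unnecessary here, since Theorem~\ref{class_of_scheme_pst} with $1\in\ii^-$ already gives $\ord(\alpha)=\ord(\lambda_0-\lambda_1)=1$ directly.
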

\begin{proof}
Suppose there is PST on $J(2k,k,i)$. Then, according to Theorem~\ref{class_of_scheme_pst}, we must have $\ord(\alpha)=\ord(\lambda_0-\lambda_1)$, which implies, using~(\ref{lambda_k-lambda_k_1}), that $\ord(\alpha)=1$. 
 If $k$ is even, then according to Proposition~\ref{evals_partition_even_odd},  we have $k-i+2, k-i\in \ii^+$ and, according to Theorem~\ref{class_of_scheme_pst}, it must be the case that  $\ord(\lambda_0-\lambda_{k-i+2})>1$ and $\ord(\lambda_0-\lambda_{k-i})>1$. But then we must have $4|\lambda_{k-i+2}-\lambda_{k-i} $ which contradicts Lemma~\ref{lambda_{k-i+2}-lambda_{k-i}}. Also, if $k$ is odd, then $k-i+1, k-i-1 \in \ii^+$ and we get a similar contradiction with Lemma~\ref{lambda_{k-i+1}-lambda_{k-i-1}}. This completes the proof. 
 \end{proof}

\begin{remark}\label{computer}
In order to get the desired ``contradiction'' based on the eigenvalue differences in Lemmas~\ref{ord_alpha_when_k_choose_i_is_odd},~\ref{lambda_{k-i+2}-lambda_{k-i}} and \ref{lambda_{k-i+1}-lambda_{k-i-1}}, we have picked some ``very special'' eigenvalues. Indeed, these eigenvalues are suggested by some computer calculations. It is interesting that the eigenvalue differences rarely result in a contradiction; that is, for majority of the eigenvalues of the graphs, the ``$\ord$'' condition, actually, holds.
\end{remark}

Now we can summarize the proof of the main result as follows.
\begin{proof}[Proof of Theorem~\ref{main_johnson}]
The proof follows from Proposition~\ref{n=2k}, Corollary~\ref{kneser_has_pst} and Lemmas~\ref{no_pst_when_degree_odd} and \ref{no_pst_when_k_choose_i_odd_and_k_1_choose_i_even}, \ref{no_pst_when_k_choose_i_and_k_1_choose_i_are_odd}.
\end{proof}

\section{PST on Sums of the Classes}\label{section_sums}
In this section, we consider sums of the classes of the Johnson scheme $J(n,k)$ and investigate the possibility of existence of PST on the graphs obtained by the unions of some of the generalized Johnson graphs. 
We first recall the following two results (see \cite{godsil2012state}).
\begin{prop}\label{complement}
Suppose $X$ is a regular graph  with PST from a vertex $u$ to another vertex $v$ at time $\tau$. If $\tau$ is an integer multiple of $2\pi/|V(X)|$, then there is PST from $u$ to $v$ at time $\tau$ in the complement graph $\overline{X}$ of $X$.
\qed
\end{prop}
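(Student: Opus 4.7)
The plan is to reduce the transition matrix of $\overline{X}$ to that of $X$ up to a global phase, by exploiting that the adjacency matrix $\overline{A}$ of the complement equals $J - I - A$, where $J$ is the $|V(X)| \times |V(X)|$ all-ones matrix. Since $X$ is regular, the all-ones vector is an eigenvector of $A$, and $J$ has rank one with the same eigenvector, so $A$, $I$, and $J$ pairwise commute. Consequently the matrix exponential splits cleanly:
\[
\h_{\overline{X}}(\tau) \;=\; e^{i\tau(J-I-A)} \;=\; e^{-i\tau}\, e^{i\tau J}\, e^{-i\tau A}.
\]

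Next I would evaluate $e^{i\tau J}$ using the spectral decomposition of $J$, whose only nonzero eigenvalue is $n := |V(X)|$ with eigenprojector $\tfrac{1}{n}J$. A one-line computation yields
\[
e^{i\tau J} \;=\; I + \frac{e^{i\tau n} - 1}{n}\, J.
\]
The hypothesis that $\tau$ is an integer multiple of $2\pi/n$ is exactly what forces the scalar $e^{i\tau n}$ to equal $1$, and hence $e^{i\tau J} = I$. Plugging this back in gives
\[
\h_{\overline{X}}(\tau) \;=\; e^{-i\tau}\, e^{-i\tau A} \;=\; e^{-i\tau}\, \h_X(-\tau) \;=\; e^{-i\tau}\, \overline{\h_X(\tau)},
\]
the last equality because $A$ is a real symmetric matrix.

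Finally, taking the $(u,v)$ entry and absolute values gives $|\h_{\overline{X}}(\tau)_{u,v}| = |\h_X(\tau)_{u,v}| = 1$, so PST from $u$ to $v$ at time $\tau$ in $X$ transfers to PST from $u$ to $v$ at the same time in $\overline{X}$. There is no real obstacle in this argument; the only subtlety is recognizing that regularity is exactly what is needed to make $A$ and $J$ commute so that the exponential of the sum can be factored, and that the quantization condition on $\tau$ is exactly what kills the $J$-factor.
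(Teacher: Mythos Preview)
Your argument is correct. The paper does not supply its own proof of this proposition; it simply quotes the result from Godsil's survey \cite{godsil2012state} and marks it with a \qed. What you have written is precisely the standard proof: use $\overline{A}=J-I-A$, exploit regularity to make $A$ and $J$ commute so the exponential factors, and then observe that the hypothesis $\tau\in \tfrac{2\pi}{n}\mathbb{Z}$ forces $e^{i\tau J}=I$, leaving $\h_{\overline{X}}(\tau)$ equal to $\overline{\h_X(\tau)}$ up to a unimodular scalar. There is nothing to add or correct.
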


\begin{prop}\label{kay}
If there is PST in a graph $X$ from $u$ to $v$ and also from $u$ to $w$, then $u=w$.\qed
\end{prop}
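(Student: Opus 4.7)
The plan is to prove the monogamy conclusion (which I read as $v = w$, treating the displayed "$u=w$" as a typo for "$v=w$"), and throughout I assume the genuinely non-trivial case $u\neq v$ and $u\neq w$. I would work at the level of the spectral decomposition $\h(t) = \sum_s e^{it\theta_s} E_s$, where the $E_s$ are the eigenprojectors of $A(X)$ and the $\theta_s$ are its distinct eigenvalues. Writing the two PST hypotheses as $\h(\sigma) e_u = \alpha e_v$ and $\h(\tau) e_u = \beta e_w$ with $|\alpha| = |\beta| = 1$, and applying each $E_s$ to both equations, one obtains for every $s$ in the eigenvalue support $\Phi_u = \{s : E_s e_u \neq 0\}$ the identities
\[ E_s e_v = \bar\alpha\, e^{i\sigma \theta_s}\, E_s e_u, \qquad E_s e_w = \bar\beta\, e^{i\tau \theta_s}\, E_s e_u. \]
Because $E_s$ is real symmetric, the three vectors $E_s e_u,\, E_s e_v,\, E_s e_w$ are real, so the scalar multipliers are real of modulus one and hence belong to $\{\pm 1\}$. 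Setting $\sigma_s := \bar\alpha\, e^{i\sigma\theta_s}$ and $\sigma'_s := \bar\beta\, e^{i\tau\theta_s}$, I record $E_s e_v = \sigma_s E_s e_u$ and $E_s e_w = \sigma'_s E_s e_u$.

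The whole problem now reduces to showing that $\sigma_s \sigma'_s$ is independent of $s \in \Phi_u$. If this common value is $c$, then summing over $s\in\Phi_u$ yields $e_w = c\, e_v$, and since $e_v$ and $e_w$ are $0/1$ characteristic vectors we must have $c = 1$ and $v = w$. To pin down constancy I would observe that for any $s, s' \in \Phi_u$ the identities $e^{i\sigma(\theta_s - \theta_{s'})} = \sigma_s\sigma_{s'} \in \{\pm 1\}$ and $e^{i\tau(\theta_s - \theta_{s'})} = \sigma'_s\sigma'_{s'} \in \{\pm 1\}$ force both $\sigma(\theta_s - \theta_{s'})$ and $\tau(\theta_s - \theta_{s'})$ to lie in $\pi\mathbb{Z}$. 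Assuming $|\Phi_u| \geq 2$, this rigidly commensurates $\sigma$ and $\tau$: writing $\sigma/\tau = p/q$ in lowest terms, the two integer exponents must take the form $p\, k_{ss'}$ and $q\, k_{ss'}$ for a common integer $k_{ss'}$.

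A parity case split then closes the proof. If both $p$ and $q$ are odd, then $\sigma_s\sigma_{s'} = (-1)^{p k_{ss'}} = (-1)^{q k_{ss'}} = \sigma'_s\sigma'_{s'}$, so $\sigma_s\sigma'_s$ is constant and $v = w$. If $p$ is even (so $q$ is odd, since $\gcd(p,q)=1$), then $\sigma_s \sigma_{s'} = 1$ for all $s, s'$, meaning all $\sigma_s$ are equal and $e_v = \pm e_u$; as $e_v$ is a $0/1$ vector this forces $v = u$, contradicting the non-triviality assumption. The case of $q$ even is symmetric and forces $w = u$. Finally, the degenerate case $|\Phi_u| = 1$ makes $e_u$ itself an eigenvector of $A$, and the first PST equation then collapses $e_v$ to a scalar multiple of $e_u$, again forcing $v = u$.

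The main obstacle is precisely the commensurability-and-parity bookkeeping above. One is tempted to stop after the much cleaner identity $\h(\tau - \sigma)e_v = \bar\alpha\beta e_w$, which follows immediately from the complex symmetry of $\h(t)$ by way of $\h(-\sigma)e_v = \bar\alpha e_u$; but this weaker conclusion only says that $v$ has a PST partner $w$ at time $\tau - \sigma$, and it does not by itself force $v = w$. The spectral parity argument is what actually delivers the uniqueness.
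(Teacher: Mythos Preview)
The paper does not actually prove this proposition: it is quoted, with the \qed\ marker, from Godsil's survey \cite{godsil2012state} (the label \texttt{kay} reflects the attribution to Kay), so there is no in-paper argument to compare against.

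Your argument is correct. You are right to read the conclusion as $v=w$ and to restrict to the genuine case $u\notin\{v,w\}$. From $\h(\sigma)e_u=\alpha e_v$ and $\h(\tau)e_u=\beta e_w$ you correctly extract, on the eigenvalue support $\Phi_u$, real signs $\sigma_s,\sigma'_s\in\{\pm1\}$ with $E_se_v=\sigma_sE_se_u$ and $E_se_w=\sigma'_sE_se_u$; the key identity $\sigma_s\sigma_{s'}=e^{i\sigma(\theta_s-\theta_{s'})}\in\{\pm1\}$ (and similarly for $\tau$) indeed forces $\sigma/\tau\in\mathbb{Q}$ once $|\Phi_u|\ge 2$, and writing $\sigma/\tau=p/q$ in lowest terms yields $\sigma(\theta_s-\theta_{s'})/\pi=pk_{ss'}$, $\tau(\theta_s-\theta_{s'})/\pi=qk_{ss'}$ for a common integer $k_{ss'}$. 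Your parity trichotomy then closes the proof: both $p,q$ odd gives $\sigma_s\sigma'_s$ constant, hence $e_w=\pm e_v$ and $v=w$; one of $p,q$ even collapses $e_v$ or $e_w$ to $\pm e_u$, contradicting $u\notin\{v,w\}$. The degenerate case $|\Phi_u|=1$ is handled as you say. Your closing observation is also on point: the symmetric identity $\h(\tau-\sigma)e_v=\bar\alpha\beta e_w$ alone does not finish the job, and some form of the commensurability/parity bookkeeping is genuinely required.
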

\begin{prop}\label{automorphism}
Let $u$ and $v$ be two  vertices of a graph $X$. If there is an automorphism $\sigma$ of $X$ such that $\sigma(u)=u$ and $\sigma(v)=w\neq v$, then there cannot be PST between $u$ and $v$.
\end{prop}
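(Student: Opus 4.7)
The plan is to translate the automorphism into matrix language and exploit the fact that it must commute with the quantum walk operator. First I would let $P$ denote the permutation matrix associated with $\sigma$; because $\sigma$ is an automorphism of $X$, we have $PA=AP$ where $A=A(X)$, so $P$ commutes with every entire function of $A$, and in particular $P\h_X(\tau)=\h_X(\tau)P$ for every real $\tau$.

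Next I would assume for contradiction that PST occurs between $u$ and $v$ at some time $\tau$, so that $\h_X(\tau)e_u=\lambda e_v$ for some $\lambda\in\mathbb{C}$ with $|\lambda|=1$. Applying $P$ to both sides, using the commutation $P\h_X(\tau)=\h_X(\tau)P$ together with $Pe_u=e_{\sigma(u)}=e_u$ on the left, and $Pe_v=e_{\sigma(v)}=e_w$ on the right, turns the identity into $\h_X(\tau)e_u=\lambda e_w$. Comparing with $\h_X(\tau)e_u=\lambda e_v$ and cancelling $\lambda\neq 0$ forces $e_v=e_w$, hence $v=w$, contradicting the assumption $w\neq v$.

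There is no real obstacle: the whole argument is a one-line consequence of the fact that $P$ commutes with $\h_X(\tau)$, plus the observation that the $u$-th column of $\h_X(\tau)$ cannot be proportional to two distinct standard basis vectors. The statement may be viewed as a ``uniqueness-of-PST-partner'' companion to Proposition~\ref{kay}: the stabiliser of $u$ in $\mathrm{Aut}(X)$ must fix every PST-partner of $u$.
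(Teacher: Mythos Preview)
Your proof is correct and follows essentially the same route as the paper's: both translate the automorphism into a permutation matrix commuting with $\h_X(\tau)$ and derive that PST from $u$ to $v$ forces PST from $u$ to $w$. The only cosmetic difference is that the paper works entrywise and then invokes Proposition~\ref{kay} for the contradiction, whereas your column-vector formulation yields $\lambda e_v=\lambda e_w$ directly and so bypasses that citation.
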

\begin{proof}
There is a permutation matrix $P$ corresponding to the automorphism $\sigma$ such that $P^TAP=A$ and the condition on $\sigma$ implies that 
\[ (P^TAP)_{u,v}=A_{u,w}. \]
Since there is PST between $u$ and $v$, there is a time $\tau$ at which
$ |\h(\tau)_{u,v}|=1$,
Now since 
\[ \h(\tau)_{u,v}=\left(\mathrm{e}^{i \tau A}\right)_{u,v}= \left(\mathrm{e}^{i \tau P^TAP}\right)_{u,v}=\left(P^T \mathrm{e}^{i \tau A} P\right)_{u,v}=\left( P^T \h(\tau) P\right)_{u,v}= \h(\tau)_{u,w}; \]
thus
\[ |\h(\tau)_{u,w}| =1,  \]
which contradicts Proposition~\ref{kay}; hence  the result follows.
\end{proof}

The following is an immediate consequence of Proposition~\ref{automorphism}, which has previously been observed in the literature.
\begin{cor}\label{complete_graph}
For any $n>2$, there is no PST in the complete graph $K_n$.\qed
\end{cor}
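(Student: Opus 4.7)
The plan is to apply Proposition~\ref{automorphism} directly, exploiting the fact that the automorphism group of $K_n$ is the full symmetric group $S_n$ and so acts highly transitively on vertex pairs. The idea is that for any attempted PST pair $(u,v)$ in $K_n$, a third vertex exists to which $v$ can be moved while keeping $u$ fixed, giving an automorphism of the required form.

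Concretely, first I would fix any two distinct vertices $u,v\in V(K_n)$. Since $n>2$, there is a vertex $w\in V(K_n)\setminus\{u,v\}$. Next I would exhibit the transposition $\sigma=(v\; w)\in S_n$, which swaps $v$ and $w$ and fixes every other vertex (in particular, $\sigma(u)=u$). Because every permutation of the vertex set of $K_n$ preserves adjacency (all pairs are edges), $\sigma$ is an automorphism of $K_n$. Thus $\sigma$ satisfies the hypotheses of Proposition~\ref{automorphism} with $\sigma(u)=u$ and $\sigma(v)=w\neq v$, so there cannot be PST between $u$ and $v$. As $u$ and $v$ were arbitrary distinct vertices, no PST occurs in $K_n$.

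There is essentially no obstacle here: the proof is a one-line application of Proposition~\ref{automorphism} once the auxiliary vertex $w$ is produced, and producing $w$ needs only the assumption $n>2$. The only thing to be mildly careful about is to state explicitly that every transposition of vertices is a graph automorphism of $K_n$ (which is immediate from completeness), so that Proposition~\ref{automorphism} genuinely applies.
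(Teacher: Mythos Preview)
Your proposal is correct and matches the paper's approach exactly: the paper states the corollary as an immediate consequence of Proposition~\ref{automorphism} (with no written proof), and your argument supplies precisely the intended one-line application using a transposition $(v\;w)$ afforded by a third vertex $w$ when $n>2$.
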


In order to prove the next result, we make use of the fact that any permutation on $\{1,\ldots,n\}$, naturally induces an automorphism of a single class of the Johnson scheme $J(n,k)$ and, thus, it induces an automorphism of a graph which is the union of any number of the single classes.

\begin{lem}\label{A_B_are_complement}
Let $X$ be a graph which is a union of some of the classes of the Johnson scheme $J(n,k)$. If there is PST between two vertices $A$ and $B$ of $X$, then the $k$-subsets $A$ and $B$ are complement of each other in $\{1,\ldots,n\}$.
\end{lem}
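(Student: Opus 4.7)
The plan is to exploit the large automorphism group of $X$ via Proposition~\ref{automorphism}. The key observation is that every permutation $\sigma\in S_n$ preserves the intersection size $|C\cap D|$ of any two $k$-subsets $C,D$, hence induces an automorphism of each class $J(n,k,i)$ and therefore of the union $X$.

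First I would look at the stabilizer of the vertex $A$ inside $S_n$, namely $\mathrm{Sym}(A)\times\mathrm{Sym}(\overline{A})$. Each element of this subgroup is an automorphism of $X$ that fixes $A$, so by Proposition~\ref{automorphism} it must fix $B$ as well; otherwise PST between $A$ and $B$ would be ruled out.

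Next I would describe the orbit of $B$ under $\mathrm{Sym}(A)\times\mathrm{Sym}(\overline{A})$. Writing $B=(B\cap A)\sqcup(B\cap\overline{A})$ and using that the two factors act independently on $A$ and on $\overline{A}$, the orbit of $B$ is exactly the set of $k$-subsets $B'$ with $|B'\cap A|=|B\cap A|$ and $|B'\cap\overline{A}|=|B\cap\overline{A}|$. For this orbit to collapse to $\{B\}$, each of $|B\cap A|$ and $|B\cap\overline{A}|$ must be either $0$ or the maximum possible value, so $B\cap A\in\{\varnothing, A\}$ and $B\cap\overline{A}\in\{\varnothing, \overline{A}\}$.

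A short case analysis finishes the job. The combinations $(\varnothing,\varnothing)$ and $(A,\overline{A})$ are excluded by $|B|=k$ with $0<k<n$, while $(A,\varnothing)$ forces $B=A$, contradicting the standing convention that PST occurs between \emph{distinct} vertices. The only surviving case is $(\varnothing,\overline{A})$, which gives $B=\overline{A}$, as claimed. I do not foresee any real obstacle here; the mild subtlety is simply to remember that PST refers to transfer between distinct vertices, since $A=B$ would reduce to ordinary periodicity at $A$.
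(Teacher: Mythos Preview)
Your proposal is correct and takes essentially the same approach as the paper: both arguments apply Proposition~\ref{automorphism} using permutations of $\{1,\ldots,n\}$ that fix $A$ but move $B$. The only cosmetic difference is that the paper exhibits explicit transpositions (splitting into the cases $A\cap B\neq\varnothing$ and $A\cap B=\varnothing$), whereas you phrase the same idea more uniformly via the orbit of $B$ under the full stabilizer $\mathrm{Sym}(A)\times\mathrm{Sym}(\overline{A})$.
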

\begin{proof}
Suppose $A$ and $B$ are vertices of $X$ which are not complement of each other. Assume, without loss of generality, that $A=\{1,\ldots,k\}$; thus $B\neq \{k+1,\ldots,n\}$. Consider the two possible cases.

\medskip
 \textbf{Case~1:} $A\cap B\neq \varnothing$. In this case, there are elements $x,x'\in A$ such that $x\in A\cap B$ and $x'\in A\setminus B$. Then the transposition $\sigma=(x\;x')\in \mathrm{Sym}(n)$ is an automorphism of $X$ which fixes the vertex $A$ but moves the vertex $B$. Therefore, according to Proposition~\ref{automorphism}, there cannot be   PST between $A$ and $B$.
 
  \textbf{Case~2:} $A\cap B = \varnothing$. In this case, $B\subsetneq \{k+1,\ldots, n\}$. Hence there are elements $y,y'\in \{k+1,\ldots, n\}$ such that $y\in B$ and  $y'\notin B$. Then the transposition $(y\; y')\in \mathrm{Sym}(n)$ is an automorphism of $X$ which fixes $A$ but moves $B$. Then we get a similar contradiction as in Case~1.
\end{proof}

\begin{cor}\label{sum_n=2k}
Let $X$ be a graph which is  a union of some of the classes of the Johnson scheme $J(n,k)$. If there is PST between two vertices of $X$, then  $n=2k$.\qed
\end{cor}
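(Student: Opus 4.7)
The plan is to derive Corollary~\ref{sum_n=2k} as a one-line consequence of Lemma~\ref{A_B_are_complement}. Suppose $X$ is a union of some classes of the Johnson scheme $J(n,k)$ and that PST occurs between two distinct vertices $A$ and $B$ of $X$. First I would invoke Lemma~\ref{A_B_are_complement} to conclude that the $k$-subsets $A$ and $B$ are complements of one another in $\{1,\ldots,n\}$; that is, $A\cap B=\varnothing$ and $A\cup B=\{1,\ldots,n\}$.

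Next, taking cardinalities yields $|A|+|B|=|A\cup B|=n$, and since by the definition of the Johnson scheme both $A$ and $B$ have size $k$, this forces $2k=n$, which is the desired conclusion.

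There is no real obstacle to surmount here, because the structural content has already been packaged into Lemma~\ref{A_B_are_complement}, whose proof in turn relies on the transposition automorphisms supplied by Proposition~\ref{automorphism}. Morally, this is the union-of-classes analogue of Proposition~\ref{n=2k}; the reason the single-class argument extends verbatim to arbitrary unions is that every transposition of $\{1,\ldots,n\}$ simultaneously acts as an automorphism of every class of $J(n,k)$, and hence of any graph $X$ obtained as a union of such classes, so the same fixed-point/moved-point obstruction to PST between non-complementary $k$-subsets continues to apply.
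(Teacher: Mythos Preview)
Your proposal is correct and matches the paper's approach exactly: the corollary is stated with a \qed\ as an immediate consequence of Lemma~\ref{A_B_are_complement}, and your cardinality argument is precisely the intended one-line deduction.
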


Note that Proposition~\ref{n=2k} is a special case of Corollary~\ref{sum_n=2k} when $X$ is just a single class. 

In order to see what happens when we consider a union  of  some classes in the Johnson scheme $J(2k,k)$, we first look at the initial  case where $k=2$. Note that the scheme $J(4,2)$ consists of the matrices $\{I, R,T\}$ where the $6={4\choose 2}$-vertex graphs corresponding to the matrices $T$ and $R$, are the Kneser graph $K(4,2)\cong 3K_2$ and the graph $K_6\setminus (3K_2)$, respectively.  According to Theorem~\ref{main_johnson}, the graph corresponding to $R$ cannot have PST. Furthermore, the union of these two graph is the complete graph $K_6$ which, according to Corollary~\ref{complete_graph}, does not have PST. Therefore, no union of the graphs in the scheme $J(4,2)$, can have PST (except for the single class of the graph of $T$). However, the situation is a bit different for the next case where $k=3$. We observe the following general fact.

\begin{cor}\label{all_but_T}
Let $X$ be the graph 
\[ X= J(2k,k,1)\cup \cdots\cup J(2k,k,k-1). \] 
If ${2k\choose k}\equiv 0$ (mod $4$), then for any vertex $A$ of $X$, there is PST from $A$ to the vertex $\overline{A}$ at time $\pi/2$.
\end{cor}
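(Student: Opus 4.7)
The plan is to realize $X$ as the complement of the Kneser graph $K(2k,k)$ and then transfer the known PST on $K(2k,k)$ across to $X$ by means of Proposition~\ref{complement}.

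First I would observe that the identity together with the adjacency matrices of all the classes sum to the all-ones matrix, i.e.\ $I+A_1+\cdots+A_k=J$, where $A_k=A(K(2k,k))$. Consequently the adjacency matrix of $X$ satisfies
\[ A_1+\cdots+A_{k-1} \;=\; J-I-A_k, \]
which is precisely the adjacency matrix of the complement $\overline{K(2k,k)}$. Thus $X$ and $K(2k,k)$ are complementary graphs on $v=\binom{2k}{k}$ vertices, and $K(2k,k)$ is $1$-regular.

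Second, I would invoke Corollary~\ref{kneser_has_pst} (or, equivalently, the fact that $K(2k,k)\cong \tfrac{1}{2}v\,K_2$ pairs every $k$-set $A$ with its complement $\overline{A}$, together with Corollary~\ref{K_2}) to conclude that there is PST from $A$ to $\overline{A}$ in $K(2k,k)$ at time $\tau=\pi/2$. Third, I would verify the integrality hypothesis of Proposition~\ref{complement}: writing $v=\binom{2k}{k}=4m$, which is permitted by the assumption $\binom{2k}{k}\equiv 0\,(\mathrm{mod}\,4)$, we have
\[ \tau \;=\; \frac{\pi}{2} \;=\; m\cdot\frac{2\pi}{v}, \]
so $\tau$ is an integer multiple of $2\pi/|V(X)|$. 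Applying Proposition~\ref{complement} to the regular graph $K(2k,k)$ then yields PST from $A$ to $\overline{A}$ in $\overline{K(2k,k)}=X$ at time $\pi/2$, which is the desired conclusion.

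The whole argument reduces to a single application of the complement principle, so there is no substantive obstacle. The only point needing care is matching the hypothesis $4\mid\binom{2k}{k}$ to the integrality condition in Proposition~\ref{complement}: it is precisely this divisibility that makes $\pi/2$ an admissible multiple of $2\pi/v$. No additional regularity or connectivity assumption on $X$ is required, since $K(2k,k)$ is already $1$-regular.
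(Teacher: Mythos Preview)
Your proposal is correct and follows essentially the same route as the paper's own proof: identify $X$ as the complement of $K(2k,k)$, use that PST occurs on $K(2k,k)$ at time $\pi/2$, and apply Proposition~\ref{complement} using the hypothesis $4\mid\binom{2k}{k}$ to check that $\pi/2$ is an integer multiple of $2\pi/v$. Your explicit verification that $X=\overline{K(2k,k)}$ via $A_1+\cdots+A_{k-1}=J-I-A_k$ is a small elaboration of what the paper leaves implicit, but otherwise the arguments coincide.
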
 
\begin{proof}
First  note that  PST occurs between any vertex $A$ and its complement  $\overline{A}$ in the Kneser graph $K(2k,k)=J(2k,k,0)$ corresponding to the class $T$ of $J(2k,k)$ at time $\pi/2$. Now since $|V(K(2k,k))|=|V(X)|=4a$, for some integer~$a$, we have
\[\frac{\pi}{2} = a \,\frac{2\pi}{|V(x)|}, \]
hence, according to Proposition~\ref{complement}, there is PST between $A$ and $\overline{A}$ on $X=\overline{K(2k,k)}$ at time $\pi/2$, for any vertex $A$.
\end{proof}

For the case $k=3$, we have $|V(J(n,k))|=20$. The scheme has the classes $\{I,R_1,R_2,T\}$, where the   graphs corresponding to the classes $R_1$ and $R_2$ are two $9$-regular isomorphic graphs. According to Corollary~\ref{all_but_T}, on the union of these two graphs, PST  occurs between any vertex $A$ and its complement $\overline{A}$ (at time~$\pi/2$). Furthermore, the union of all the classes, i.e. the complete graph, does not have PST. On the other hand, computer calculations with Maple software, show that there cannot be PST between any vertex $A$ and $\overline{A}$ on the graph whose adjacency matrix is $T+R_1$ (or $T+R_2$). Therefore, the only case where PST can occur on a union of the classes, is where we consider the  union of all the classes except the Kneser graph $K(6,3)$.  Therefore come up with the following important   question. 

\medskip
\noindent\textbf{Question.} Which  unions of the graphs in the Johnson scheme $J(2k,k)$ can have~PST?
\medskip

So far, besides Corollary~\ref{all_but_T},  the most we can say is the following result.

\begin{lem}\label{sums_perm_T}
Let $X$ be a graph which is a union
\[ J(n,k,i_1)\cup \cdots\cup J(n,k,i_s) \]
 of some of the classes of the Johnson scheme $J(n,k)$, where $s>1$. If there is PST on $X$ then $n=2k$  and   there is a permutation matrix $T$ with no fixed points and of order two such that $\h_X(\tau) =\lambda T$ for some $\lambda \in \mathbb{C}$ with $|\lambda|=1$.
\end{lem}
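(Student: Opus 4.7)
The plan is to show that $X$ inherits both vertex-transitivity and connectedness from its constituent classes, and then invoke Theorem~\ref{perm_matrix_vtransitive}.

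First, the equality $n=2k$ is immediate: since PST requires PST between some pair of vertices of $X$, Corollary~\ref{sum_n=2k} applies and gives $n=2k$.

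Next I would verify vertex-transitivity. The natural action of $\mathrm{Sym}(n)$ on the $k$-subsets of $\{1,\ldots,n\}$ preserves the size of pairwise intersections, hence preserves each individual class $J(n,k,i_j)$, and therefore preserves the union $X$. Since this action is transitive on $k$-subsets, $X$ is vertex-transitive.

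I would then establish connectedness. Because $s>1$ and the indices $i_1,\ldots,i_s$ are distinct, at least one of them, say $i_j$, lies in $\{1,\ldots,k-1\}$. With $n=2k$ this falls in the range $2k-n<i_j<k$ required by Lemma~\ref{connected}, so the class $J(2k,k,i_j)$ is connected. Being a spanning subgraph of $X$, it forces $X$ itself to be connected.

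With vertex-transitivity and connectedness in hand, Theorem~\ref{perm_matrix_vtransitive} directly produces a fixed-point-free involutory permutation matrix $T$ with $\h_X(\tau)=\lambda T$, and unitarity of $\h_X(\tau)$ gives $|\lambda|=1$. There is no substantive obstacle: the lemma reduces to checking that the union $X$ satisfies the hypotheses of Theorem~\ref{perm_matrix_vtransitive}, and both properties follow from the symmetric-group action on $k$-subsets together with a single application of Lemma~\ref{connected}.
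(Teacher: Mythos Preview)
Your proposal is correct and follows exactly the paper's approach: invoke Corollary~\ref{sum_n=2k} for $n=2k$, check vertex-transitivity and connectedness, and apply Theorem~\ref{perm_matrix_vtransitive}. You simply spell out what the paper asserts in one line---in particular, your observation that $s>1$ forces some $i_j\in\{1,\ldots,k-1\}$ (so Lemma~\ref{connected} applies with $n=2k$) makes explicit a step the paper leaves to the reader.
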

\begin{proof}
Since $X$ is vertex-transitive and,  according to  Lemma~\ref{connected}, it is connected,  the result follows from Theorem~\ref{perm_matrix_vtransitive} and  Corollary~\ref{sum_n=2k}. Note that since $\h_X(\tau)$ is unitary, we must have $|\lambda|=1$.
\end{proof}

\section{\bf Further Work}\label{conclusion}

We considered the problem of existence of PST on the classes of the Johnson scheme $J(n,k)$. It was not hard to see that $n=2k$ is a necessary condition and also to see that the Kneser graph $J(2k,k,0)$, trivially admits PST. Although  the other classes $J(2k,k,i)$ ($1\leq i \leq k-1$), display important  evidences of capability of having PST (see Remark~\ref{computer}, for example), using some eigenvalue analyses, we showed that none of these classes can admit PST. In addition, we questioned in which unions of the graph in the Johnson scheme $J(2k,k)$, PST can occur which seems an interesting problem.

It sounds an interesting problem to consider other schemes as well. For example, given a permutation group $G\leq \mathrm{Sym}(n)$, which classes of the conjugacy class scheme of $G$ can possibly have PST? It is not hard to see that two major necessary conditions for   $G$ to have PST on any of the graphs in its conjugacy class scheme are as follows: (a) any conjugacy class of $G$ must be closed under inversion, and (b) $G$ must  have a singleton conjugacy class other than the identity class. As an example of such a group, we can name the dihedral group $D_{2n}\leq \mathrm{Sym}(n)$, when $n$ is even.  Indeed we can show that there exist PST in some of the classes of the conjugacy class scheme of $D_{2n}$ (where it occurs on $K_2$'s) and the examples show that there are groups other than the dihedral groups which have PST. Therefore it  seems an interesting problem to classify the permutation groups, acting on $\{1,\ldots,n\}$, which have  the (necessary) conditions (a) and (b).

\section*{Acknowledgment}

The first author would like to thank the financial support of the Iranian National Elites' Foundation.

\bigskip\bigskip
\noindent {\bf References}


\begin{thebibliography}{1}

\bibitem{cheung2011perfect}
Wang-Chi Cheung and Chris Godsil.
\newblock Perfect state transfer in cubelike graphs.
\newblock {\em Linear Algebra and its Applications}, 435(10):2468--2474, 2011.

\bibitem{christandl2004perfect}
Matthias Christandl, Nilanjana Datta, Artur Ekert, and Andrew~J Landahl.
\newblock Perfect state transfer in quantum spin networks.
\newblock {\em Physical review letters}, 92(18):187902, 2004.

\bibitem{coutinho2015perfect}
Gabriel Coutinho, Chris Godsil, Krystal Guo, and Fr{\'e}d{\'e}ric Vanhove.
\newblock Perfect state transfer on distance-regular graphs and association
  schemes.
\newblock {\em Linear Algebra and its Applications}, 478:108--130, 2015.

\bibitem{eiichi1984algebraic}
Bannai Eiichi and Ito Tatsuro.
\newblock Algebraic combinatorics i. association schemes.
\newblock {\em Mathematics Lecture Note Series}, 1984.

\bibitem{fine1947binomial}
Nathan~J Fine.
\newblock Binomial coefficients modulo a prime.
\newblock {\em The American Mathematical Monthly}, 54(10):589--592, 1947.

\bibitem{godsil2011periodic}
Chris Godsil.
\newblock Periodic graphs.
\newblock {\em Electron. J. Combin}, 18(1):P23, 2011.

\bibitem{godsil2012state}
Chris Godsil.
\newblock State transfer on graphs.
\newblock {\em Discrete Mathematics}, 312(1):129--147, 2012.

\bibitem{godsil2013algebraic}
Chris Godsil and Gordon~F Royle.
\newblock {\em Algebraic graph theory}, volume 207.
\newblock Springer Science \& Business Media, 2013.

\bibitem{Petkovic}
Marko~D. Petkovi\'{c} and Milan Ba\v{s}i\'{c}.
\newblock Further results on the perfect state transfer in integral circulant
  graphs.
\newblock {\em Computers \& Mathematics with Applications}, 61(2):300 -- 312,
  2011.

\end{thebibliography}
\end{document}